\newcommand{\Assouad}{\dim_{\mathrm{A}}}
\newcommand{\ubox}{\overline{\dim_{\mathrm{B}}}}
\newcommand{\lbox}{\underline{\dim_{\mathrm{B}}}}
\newcommand{\nbox}{\dim_{\mathrm{B}}}
\newcommand{\Haus}{\dim_{\mathrm{H}}}
\newtheorem*{thm*}{Theorem}
\newtheorem{thm}{Theorem}[section]
\newtheorem{lma}[thm]{Lemma}
\newtheorem{defn}[thm]{Definition}
\newtheorem{conj}[thm]{Conjecture}
\newtheorem{rem}[thm]{Remark}
\newtheorem{ques}[thm]{Question}
\begin{document}

\title{Weak tangents and level sets of Takagi functions}

\author{Han Yu}
\address{Han Yu\\
School of Mathematics \& Statistics\\University of St Andrews\\ St Andrews\\ KY16 9SS\\ UK \\ }
\curraddr{}
\email{hy25@st-andrews.ac.uk}
\thanks{}

\subjclass[2010]{Primary: 28A80, 37C45 Secondary: 26A27}

\keywords{Littlewood polynomial, Takagi functions, level sets, Assouad dimension}

\date{}

\dedicatory{}

\begin{abstract}
In this paper we study some properties of Takagi functions and their level sets. We show that for Takagi functions $T_{a,b}$ with parameters $a,b$ such that $ab$ is a root of a Littlewood polynomial, there exist large level sets. As a consequence we show that for some parameters $a,b$, the Assouad dimension of graphs of $T_{a,b}$ is strictly larger than their upper box dimension. In particular we can find weak tangents of those graphs with large Hausdorff dimension, larger than the upper box dimension of the graphs.
\end{abstract}

\maketitle
\allowdisplaybreaks

\section{Introduction}
In this paper we study graphs of the following functions,
\[
T_{a,b}(x)=\sum_{n=0}^{\infty} a^nT(b^n x),
\]
where $a,b$ are real parameters $a,b$ such that $a<1,b>1, ab\geq 1$ and $T:\mathbb{R}\to\mathbb{R}$ is the tent map which has period $1$ and defined on the unit interval as follows,
\[
T(x)=
\begin{cases}
x & x\in [0,\frac{1}{2}] \\
1-x & x\in [\frac{1}{2},1].
\end{cases}
\]
Such functions $T_{a,b}$ are called the Takagi functions. Originally the Takagi function was referred to $T_{1/2,2}$ but no confusion should appear if we also call $T_{a,b}$ the Takagi functions. There has been a lot of interest in the Hausdorff and box counting dimensions of the graphs of such functions. For the box dimensions we know from \cite[Section 2]{KPY} and \cite[Theorem 2.4]{Ba} that the upper box dimension of graphs of these functions $T_{a,b}$ can be computed by the following formula
\[
B=2+\frac{\ln a}{\ln b}=1+\frac{\ln ab}{\ln b}.
\]
The Hausdorff dimensions of graphs of these functions are harder to obtain, see \cite{WXS},\cite{BBR} and the references therein for more recent results on related questions.

One of the results of this paper is about the Assouad dimension of some Takagi functions. In what follows,	for a function $f:\mathbb{R}\to\mathbb{R}$, we denote the following set
\[
\Gamma_f=\{(x,y)\in\mathbb{R}^2:x\in [0,1], y=f(x)\},
\]
to be the graph of $f$ over the interval $[0,1]$.
\begin{thm}[Assouad dimension]\label{MAIN}
	Let the product $ab>1$ be a root of a Littlewood polynomial of degree $k-1$, namely
	\[
	\sum_{n=0}^{k-1}\epsilon_n(ab)^n=0,
	\]
	for a sequence $\{\epsilon_n\}_{n\in \{0,\dots,k-1\}}$ over $\{0,1\}$.
	Furthermore, if $b$ is an integer greater than $2$, then we have the following result,
	\[
	\dim_{\mathrm{A}} \Gamma_{T_{a,b}}\geq 1+\frac{1}{k}.
	\]
\end{thm}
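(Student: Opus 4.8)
The plan is to bound $\Assouad \Gamma_{T_{a,b}}$ from below by the Hausdorff dimension of a carefully chosen weak tangent, using the standard fact that $\Assouad E \ge \Haus F$ whenever $F$ is a weak tangent of $E$. Concretely, I will produce a weak tangent that contains a product $L\times[0,1]$, where $L\subseteq[0,1]$ is a level set of $T_{a,b}$ with $\Haus L\ge 1/k$; since $\Haus(L\times[0,1])=\Haus L+1$, this yields $\Assouad\Gamma_{T_{a,b}}\ge 1+1/k$.

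The first step is to exploit the self-affine structure coming from the functional equation $T_{a,b}(x)=T(x)+aT_{a,b}(bx)$, valid upon splitting off the first term of the defining series. Since $b$ is an integer, $T_{a,b}$ has period $1$, and iterating the equation $k$ times shows that on each $b$-adic interval $[jb^{-k},(j+1)b^{-k}]$ one has $T_{a,b}((j+t)b^{-k}) = L_j(t)+a^k T_{a,b}(t)$ for $t\in[0,1]$, where $L_j$ is affine with slope $b^{-k}\sum_{n=0}^{k-1}(ab)^n\sigma_n^{(j)}$ and the signs $\sigma_n^{(j)}\in\{-1,+1\}$ are the local slopes of the tent map, determined by the base-$b$ digits of $j$. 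Thus the graph over such an interval is an affine image of $\Gamma_{T_{a,b}}$ with horizontal ratio $b^{-k}$, vertical ratio $a^k$, and a shear governed by that slope.

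The key step is the Littlewood cancellation. Because $ab$ is a root of $\sum_{n=0}^{k-1}\epsilon_n(ab)^n=0$, I can prescribe a sign pattern $(\sigma_0,\dots,\sigma_{k-1})$ matching the coefficients so that the accumulated slope $\sum_{n=0}^{k-1}(ab)^n\sigma_n$ vanishes. The hypothesis $b>2$ guarantees that each prescribed sign is realised by a suitable digit (small digits give $+1$, large digits give $-1$) and, crucially, provides spare digit choices. For such a \emph{shear-free} block the affine map reduces to the diagonal contraction $(x,y)\mapsto(b^{-k}x+u,\,a^k y+v)$; since $ab>1$ we have $b^{-k}<a^k$, so these maps contract strongly in the horizontal direction and mildly in the vertical one, with eccentricity $(ab)^k$. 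Using the residual digit freedom I then build, at a common height $c$, a graph-directed self-similar family of shear-free blocks keeping the graph recurrent at height $c$, with $b$ admissible continuations per block of $k$ digits; the set of $x$ realised is a level set $L=\{x:T_{a,b}(x)=c\}$ that is self-similar with ratio $b^{-k}$ and $b$ branches, whence $\Haus L=\log b/\log(b^k)=1/k$.

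Finally I pass to the weak tangent. Zooming toward $L$ and rescaling anisotropically by $b^{pk}$ horizontally and $a^{-pk}$ vertically, the diagonal shear-free pieces map back to full copies of the graph while the horizontal Cantor structure of $L$ is reproduced; because horizontal is the strongly contracted direction of a self-affine system with a dominant direction, the Hausdorff limits of these rescaled pieces contain the product $L\times[0,1]$, the vertical interval arising from the oscillations between consecutive points of $L$ filling up after the vertical magnification. This produces a weak tangent of Hausdorff dimension $1+1/k$ and completes the argument. The main obstacle is precisely this last construction: one must (i) maintain a genuinely fractal level set at a single height $c$ using only the one Littlewood relation, controlling the bottom heights of the chosen blocks so that the height is preserved across all branches and the branch count is exactly enough to force dimension $1/k$, and (ii) prove that after vertical magnification the excursions between level-set points fill a full vertical interval, so that the tangent contains the solid product $L\times[0,1]$ rather than merely a union of curves. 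Both points require careful bookkeeping of the heights and a compactness argument for the Hausdorff limit.
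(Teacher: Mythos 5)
Your first half --- the construction of a level set of dimension $1/k$ via the Littlewood cancellation, the shear-free $b$-adic blocks, and the self-similar recursion with ratio $b^{-k}$ and $b$ branches --- is essentially the paper's own proof of Theorem \ref{LEVEL}, and it is sound. The divergence, and the gap, is in how you convert the level set into a lower bound for $\Assouad\Gamma_{T_{a,b}}$.

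You propose to realise $L\times[0,1]$ as a ``weak tangent'' by rescaling by $b^{pk}$ horizontally and $a^{-pk}$ vertically. Since $ab>1$ these factors differ, and their ratio $(ab)^{pk}$ tends to infinity, so your maps are affine with unbounded eccentricity rather than similarities. The inequality $\Assouad F\geq\Haus E$ for weak tangents $E$ (Theorem \ref{ThKOR}) is only available when the tangent is taken along similarity maps, or at least maps with uniformly bounded bi-Lipschitz distortion: the proof compares $N_r(B(x,R)\cap F)$ with the corresponding count for $T_k(F)$, and this comparison is lost when the $T_k$ distort balls into ellipses of degenerating shape. So even if your anisotropic limit did contain $L\times[0,1]$, no Assouad bound would follow from it; and establishing that containment is itself unresolved in your sketch (over a fixed $x\in L$ the rescaled graph is still locally a curve, and by Lemma \ref{LL2} its oscillation over a rescaled window of width $\delta$ is only of order $\delta^{2-B}$, so the vertical fibres do not obviously fill up). The paper avoids the tangent altogether: in Section \ref{SAC} it fixes the two scales $R=b^{-(M+1)}$ and $r=R^{B}$ (where $B=\ubox\Gamma_{T_{a,b}}$, chosen precisely so that the affinely squashed $r$-boxes become comparable to genuine squares), counts directly over the level set using the oscillation bound of Lemma \ref{LL2} to get $N(S(a,R/2)\cap\Gamma_{G_M},r)\gtrsim(R/r)^{1+1/k^{-}}$, transfers this from the squashed tail $G_M$ back to $T_{a,b}$ via the Lipschitz counting Lemma \ref{ADD} and the slow-changing-interval condition (Lemmas \ref{POWERBOUND} and \ref{NONINTEGRAL}, where the hypothesis on $b$ enters), and finally invokes the Assouad spectrum inequality $\Assouad\geq\Assouad^{1/B}$. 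Note also that in the paper the large weak tangent (Lemma \ref{Weaktangent}) is a \emph{consequence} of Theorem \ref{MAIN} via Theorem \ref{ThKOR}, not an ingredient of its proof; your argument runs this implication in the direction in which it is not valid for the maps you use. To repair your proof you would essentially have to reprove the two-scale counting estimate, at which point the tangent construction becomes superfluous.
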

Notice that by keeping the product $ab$ unchanged and making $b$ larger, this lower bound can be larger than the upper box dimension $\frac{\ln ab}{\ln b}$ for large $b$. For example, when we choose parameters such that $ab=\frac{\sqrt{5}+1}{2}, b=8$, then $\ubox\Gamma_{T_{a,b}}\approx 1.23$ and $\Assouad\Gamma_{T_{a,b}}\geq 4/3.$ 

One consequence of Theorem \ref{MAIN} is that there exist large weak tangents of the graphs of Takagi functions. See Section \ref{Pre} for more details about the notions of dimensions, definition of weak tangent and some basic properties.
\begin{lma}[Weak tangent]\label{Weaktangent}
	Let $a,b$ be as in the statement of Theorem \ref{MAIN}, then there exists weak tangent $E$ of $\Gamma_{T_{a,b}}$ such that
	\[
	\Haus E=\Assouad \Gamma_{T_{a,b}}\geq\ubox\Gamma_{T_{a,b}}.
	\]
	The last inequality can be strict.
\end{lma}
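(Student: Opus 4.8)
The plan is to obtain the lemma by combining a general attainment principle for weak tangents with Theorem \ref{MAIN}, rather than by a bespoke construction. I would first dispose of the inequality $\Assouad\Gamma_{T_{a,b}}\geq\ubox\Gamma_{T_{a,b}}$, which is nothing but the general ordering $\Haus\leq\lbox\leq\ubox\leq\Assouad$ applied to the compact set $\Gamma_{T_{a,b}}$; this is recorded in Section \ref{Pre}.

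The substance of the lemma is the equality $\Haus E=\Assouad\Gamma_{T_{a,b}}$ for a suitable weak tangent $E$. Here I would invoke two facts about weak tangents of a closed set $F$, both collected in Section \ref{Pre}: first, every weak tangent $E$ satisfies $\Assouad E\leq\Assouad F$, so that $\Haus E\leq\Assouad E\leq\Assouad F$ for all such $E$; second, the Assouad dimension is \emph{attained}, in the sense that there is a weak tangent realizing $\Assouad E=\Assouad F$. Applying the second fact to $F=\Gamma_{T_{a,b}}$ gives a weak tangent with the correct Assouad dimension, while the first shows no weak tangent can exceed it. The delicate point, and the one I expect to be the main obstacle, is upgrading the attainment from Assouad to Hausdorff dimension, that is, producing a weak tangent $E$ with $\Haus E=\Assouad\Gamma_{T_{a,b}}$ and not merely $\Assouad E=\Assouad\Gamma_{T_{a,b}}$. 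The standard remedy is to arrange the extremal weak tangent to be Ahlfors regular, for which $\Haus E=\ubox E=\Assouad E$; concretely, one zooms into a scale and location witnessing the Assouad dimension and passes to a weak-$*$ limit of the rescaled normalised measures, the resulting micromeasure being Ahlfors $s$-regular with $s=\Assouad\Gamma_{T_{a,b}}$. If the tangent supplied by Section \ref{Pre} is not already of this form, I would pass to a further weak tangent of $E$ carrying such a measure and use transitivity of the weak-tangent relation (a Hausdorff limit of rescalings of rescalings is again such a limit) to keep it a weak tangent of $\Gamma_{T_{a,b}}$.

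It remains to check that the inequality $\Assouad\Gamma_{T_{a,b}}\geq\ubox\Gamma_{T_{a,b}}$ can be strict, which is where Theorem \ref{MAIN} enters. Using the box-dimension formula $\ubox\Gamma_{T_{a,b}}=1+\frac{\ln ab}{\ln b}$, I would fix the product $ab$ and let $b\to\infty$, so that $\ubox\Gamma_{T_{a,b}}\to 1$ while the bound $\Assouad\Gamma_{T_{a,b}}\geq 1+\frac1k$ of Theorem \ref{MAIN} is independent of $b$ and stays fixed. Concretely, taking $ab=\frac{\sqrt5+1}{2}$ (a root of $x^2-x-1$, so $k=3$) and $b=8$ yields $\ubox\Gamma_{T_{a,b}}\approx1.23<\frac43\leq\Assouad\Gamma_{T_{a,b}}$, whence $\Haus E=\Assouad\Gamma_{T_{a,b}}>\ubox\Gamma_{T_{a,b}}$ for the tangent $E$ produced above. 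Assembling the three steps proves the lemma.
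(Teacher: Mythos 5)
Your proposal is correct and follows essentially the same route as the paper: the equality $\Haus E=\Assouad\Gamma_{T_{a,b}}$ is exactly the attainment result of K\"aenm\"aki--Ojala--Rossi quoted as Theorem \ref{ThKOR} (which already gives Hausdorff, not merely Assouad, attainment, so your contingency plan via Ahlfors-regular micromeasures is unnecessary), the inequality $\Assouad\geq\ubox$ is the standard ordering recorded as inequality (\ref{Di}), and strictness comes from Theorem \ref{MAIN} with the example $ab=\frac{\sqrt5+1}{2}$, $b=8$, just as in the paper.
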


Theorem \ref{MAIN} follows from the existence of large level set of graphs $\Gamma_{T_{a,b}}$ and we think this result is interesting on its own.

\begin{thm}\label{LEVEL}
	Let $T_{a,b}$ be as in the statement of Theorem \ref{MAIN} but we also allow $ab=1$. For each $y\in\mathbb{R}$ we define the following level set
	\[
	L(y)=\{x\in [0,1]: T_{a,b}(x)=y\}\times \{y\}.
	\]
	There exists $y\in\mathbb{R}$ such that
	\[
	\Haus L(y)\geq \frac{1}{k}.
	\]
\end{thm}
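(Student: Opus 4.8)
The plan is to exhibit an explicit level set whose $x$-coordinates form a self-similar Cantor set, and then bound its Hausdorff dimension from below. The starting observation is that the Littlewood relation $\sum_{n=0}^{k-1}\epsilon_n(ab)^n=0$ is the key to producing many preimages of a single value. Writing $c=ab$, I would first understand how $T_{a,b}$ transforms under the natural self-affine structure: since $T$ has period $1$ and $T_{a,b}(x)=T(x)+aT_{a,b}(bx)$ (the fundamental functional equation, valid because $\sum_{n\ge0}a^nT(b^nx)=T(x)+a\sum_{n\ge0}a^nT(b^{n+1}x)$), one can track how the value $T_{a,b}(x)$ changes when $x$ is perturbed by small dyadic-type increments at scale $b^{-m}$. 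The integrality of $b$ is what makes this clean, because then $T(b^nx)$ is controlled by the base-$b$ digits of $x$ and the increments accumulate in a computable way.

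Next I would convert the Littlewood relation into a statement about differences of $T_{a,b}$. The idea is to find, at each scale, two (or more) distinct offsets $x$ and $x'$ inside a small interval such that $T_{a,b}(x)=T_{a,b}(x')$. Concretely, on each linear piece of the tent map the increment of $a^nT(b^nx)$ as we move $x$ by $\pm b^{-n}$ (staying on a single branch) is $\pm a^n b^{-n}\cdot b^n = \pm a^n$ up to the appropriate normalization, so a signed sum of such moves contributes $\sum \epsilon_n a^n$ to the value; multiplying through by the geometric weight coming from the scale $b$ turns this into $\sum \epsilon_n (ab)^n$, which vanishes by hypothesis. Thus one can design a family of $x$-values, indexed by choices made at $k$ consecutive scales, all sharing the same $T_{a,b}$-value $y$. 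I would set this up as a fixed value $y$ together with a Cantor-type construction: at each stage one has an interval on which $T_{a,b}$ takes the value $y$ at two points separated according to the digit pattern $\{\epsilon_n\}$, and the construction iterates with ratio $b^{-k}$ while keeping at least (roughly) a constant number of surviving sub-branches.

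From the geometry, the surviving set of $x$ with $T_{a,b}(x)=y$ contains a self-similar set built with contraction ratio $b^{-k}$ (one full block of $k$ scales) and with a branching number $N$ equal to the number of valid sign/offset patterns compatible with the cancellation; a careful count should give $N\ge b$ or so, but in any case at least enough that $\log N/\log b^{k}\ge 1/k$, i.e.\ $N\ge b$. The Hausdorff dimension lower bound then follows from the standard mass-distribution principle (or the open set condition for an IFS) applied to this Cantor set, yielding $\Haus L(y)\ge \log N/(k\log b)\ge 1/k$. The degenerate case $ab=1$ is handled in the same framework and is actually the simplest, since then the tent-map increments cancel trivially and the level set is visibly large.

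The main obstacle I expect is the bookkeeping in the second step: showing that the offsets really can be chosen consistently across all $k$ scales so that the partial sums stay on single linear branches of the relevant tent maps (otherwise the clean increment formula $\pm a^n$ breaks), and that distinct digit patterns give genuinely distinct points that do not accidentally collapse or overlap. Verifying this branch-consistency — equivalently, choosing a base interval small enough that every $T(b^n\cdot)$ is affine across it while the offsets remain within range — is the delicate part; once it is in place, the self-similarity and the dimension estimate are routine. I would therefore spend most of the effort making the iterative construction and its non-overlap (open set condition) precise, and only then invoke the mass-distribution principle to conclude.
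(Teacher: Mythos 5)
Your overall strategy coincides with the paper's: exploit the Littlewood relation to make the derivative of each $k$-term block $\sum_{n=jk}^{(j+1)k-1}a^nT(b^nx)$ vanish on suitable base-$b$ intervals (so the block is constant there), nest these intervals into a self-similar Cantor set of contraction ratio $b^{-k}$ inside a single level set, and conclude with the standard dimension formula for an IFS with the open set condition. The "offset/increment" mechanism you describe is just the derivative computation in disguise, and your functional equation $T_{a,b}(x)=T(x)+aT_{a,b}(bx)$, iterated $k$ times, is exactly the self-affine decomposition the paper uses.

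However, there is a genuine gap at the one step that actually produces the exponent $1/k$: you correctly identify that you need branching number $N\geq b$ per block of $k$ scales, but you offer no mechanism for it ("a careful count should give $N\ge b$ or so"). The difficulty is not counting intervals on which the sign pattern matches $\pm(\epsilon_0,\dots,\epsilon_{k-1})$ — there are many — but counting intervals on which the block partial sum is constant \emph{with the same constant value}; distinct platforms of $F_1(x)=\sum_{n=0}^{k-1}a^nT(b^nx)$ are generically at different heights, and only same-height platforms can be retained in a single level set. The naive construction therefore yields only $2$ branches per block (the pair of platforms symmetric about $x=1/2$, equal in height because $F_1$ is mirror-symmetric), giving dimension $\log 2/(k\log b)<1/k$ for the relevant case $b\geq 3$. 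The paper closes this gap by observing that the $j$-th block equals $a^{(j-1)k}F_1(b^{(j-1)k}x)$ and hence is \emph{periodic} with period $b^{-(j-1)k}$, while each parent platform has length $\tfrac12 b^{-((j-1)k-1)}$, i.e.\ contains $b/2$ full periods; periodicity forces the symmetric pair of platforms to recur at the identical height in every period, yielding $b$ same-height child platforms per parent. Without this periodicity-plus-symmetry argument (or a substitute for it), your construction does not reach the claimed bound $\Haus L(y)\geq 1/k$.
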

 
The restriction of the product $ab$ as a certain algebraic integer seems to be really strong, however, with some effort we can show that those algebraic integers are dense in $[\frac{1}{2},2]$.
\begin{thm*}
	Let $L$ be the set of algebraic integers which are roots of Littlewood polynomials namely, $x\in\mathbb{C}$ and there exist a finite sequence $\epsilon_n\in\{\pm 1\}$ such that
	\[
	\sum_{n=0}^{k-1}\epsilon_nx^n=0.
	\]
	Then $L\cap [\frac{1}{2},2]$ is dense in $[\frac{1}{2},2]$.
\end{thm*}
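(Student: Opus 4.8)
The plan is to move to the reciprocal interval, where the associated $\pm1$ power series converge, realise each target as a genuine zero of a limiting analytic function, and then pull that zero back to the polynomial truncations, which are themselves Littlewood polynomials. First I would record two reductions. A Littlewood polynomial has leading coefficient $\pm1$, hence is monic up to sign with integer coefficients, so every one of its roots is automatically an algebraic integer and it suffices to locate real roots. Next, if $\sum_{n=0}^{k-1}\epsilon_nx^n$ vanishes at some $x\neq0$, then reversing the coefficient string shows that $1/x$ is a root of another Littlewood polynomial. Since $[\tfrac12,2]$ is invariant under $x\mapsto1/x$, and since $1-x$ already exhibits $1\in L$, it is enough to prove that the real roots of Littlewood polynomials are dense in $(\tfrac12,1)$; reciprocation then gives density in $(1,2)$, and the closure contains $[\tfrac12,2]$.

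Fix $\beta\in[\tfrac12,1)$. I would choose a sign sequence $(\epsilon_m)_{m\ge0}\in\{\pm1\}$ greedily so as to drive the partial sums $s_N=\sum_{m=0}^{N-1}\epsilon_m\beta^m$ toward $0$, setting $\epsilon_N=-\operatorname{sign}(s_N)$ at each step. A one-line induction, using $\beta\ge\tfrac12$ precisely at the inequality $1-\beta\le\beta$, yields the invariant $|s_N|\le\beta^{\,N-1}$ (this is the usual reason the $\pm1$ series represents the whole interval $[-\tfrac1{1-\beta},\tfrac1{1-\beta}]$ once $\beta\ge\tfrac12$). Consequently the power series $g(z)=\sum_{m\ge0}\epsilon_mz^m$ is analytic on the unit disc and satisfies $g(\beta)=\lim_N s_N=0$.

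The truncations $Q_N(z)=\sum_{m=0}^{N-1}\epsilon_mz^m$ are Littlewood polynomials, and as $\pm1$ power series they converge to $g$ locally uniformly on the unit disc together with all derivatives. If $\beta$ is a simple zero of $g$, i.e. $g'(\beta)\neq0$, then $g$ changes sign at $\beta$; fixing a small $\eta>0$ with $g(\beta-\eta)g(\beta+\eta)<0$, the values $Q_N(\beta\pm\eta)$ inherit those signs for all large $N$, so the intermediate value theorem produces a real root of $Q_N$ in $(\beta-\eta,\beta+\eta)$. Letting $\eta\to0$ along a suitable sequence of $N$ gives real Littlewood roots converging to $\beta$, whence $\beta\in\overline{L}$.

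The main obstacle is exactly the simple-zero condition $g'(\beta)\neq0$: at a zero of even order the truncations could throw off a nearby complex-conjugate pair instead of a real root, and the greedy pattern is never locally constant in $\beta$ (its breakpoints are dense), so a naive genericity argument fails. I expect to remove the obstruction by exploiting that for $\beta>\tfrac12$ strictly the representations of $0$ as $\sum\epsilon_m\beta^m$ are highly non-unique: after matching a long prefix the residual tail value lies strictly inside the representable range, hence admits a continuum of completions, while the derivative functional $\sum_m\epsilon_m m\beta^{m-1}$ is independent of the value functional. One should therefore be able to select, among all representations of $0$, one with $g'(\beta)\neq0$. Making this selection uniform and quantitative — showing that the set of achievable derivatives is a nondegenerate interval for every $\beta\in(\tfrac12,1)$ — is the step I expect to require the most care.
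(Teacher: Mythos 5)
First, a point of comparison: the paper does not actually prove this theorem. It is stated without proof and the reader is referred to \cite{OP}, \cite{Bandt} and \cite{BY}, so there is no internal argument to measure your proposal against; it has to stand on its own.

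Your reductions are sound: a Littlewood polynomial has leading coefficient $\pm1$, so its roots are algebraic integers; reversing the coefficient string shows the root set is invariant under $x\mapsto 1/x$; and the greedy choice $\epsilon_N=-\operatorname{sign}(s_N)$ really does preserve $|s_N|\le\beta^{N-1}$ when $\beta\ge\tfrac12$, producing a $\pm1$ power series $g$ with $g(\beta)=0$. The gap is exactly where you place it, and it is genuine rather than cosmetic. If the zero of $g$ at $\beta$ has even order, locally uniform convergence of the truncations $Q_N$ (Hurwitz) only yields \emph{complex} zeros of Littlewood polynomials accumulating at $\beta$; since $L\cap[\tfrac12,2]$ consists of real roots, this proves nothing about the statement. (When the order is odd you are fine: either by your IVT argument, or because an odd number of zeros of a real polynomial inside a conjugation-invariant disc forces at least one to be real.) Your proposed repair --- choose among the continuum of $\pm1$ representations of $0$ at $\beta$ one with nonvanishing derivative --- is precisely the transversality statement that carries the real content of the theorem, and it is not automatic: the fibre of representations of $0$ is the intersection of the coding space with the kernel of one linear functional, and a second linear functional can in principle be constant on such a set. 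Ruling that out requires an actual construction (for instance a flip-and-recomplete argument showing the set of achievable derivatives is a nondegenerate interval, uniformly in $\beta$), and that is where the cited proofs do their work. As written, your argument establishes that the roots of Littlewood polynomials are dense, as a subset of $\mathbb{C}$, near every point of $(\tfrac12,1)$, but not the density of the \emph{real} roots claimed in the theorem.
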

Proofs of the above result and its generalizations can be found in \cite{OP}, \cite{Bandt} and \cite{BY}.
	
\section{Discussions and future work}
In this section we give some backgrounds of Theorem \ref{MAIN} and \ref{LEVEL}. We also pose some questions which are related with the results in this paper.
\subsection{Assouad dimensions of graphs of functions}
Theorem \ref{MAIN} deals with the Assouad dimension of some Takagi functions. It is natural to think about the Assouad dimension of other nowhere differentiable functions, for example  Weierstrass functions and graphs of the Wiener process. For the latter, we have the following result (\cite[Theorem 2.2]{HY}).
\begin{thm}[HY17]
	The graph of the Wiener process $W(.)$ over the unit interval has the Assouad dimension equal to $2$ almost surely.
\end{thm}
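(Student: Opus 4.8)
The plan is to split the asserted equality into the trivial upper bound and the substantive lower bound. Since $\Gamma_W$ is a subset of the plane, $\Assouad \Gamma_W \le 2$ holds deterministically, so everything reduces to proving $\Assouad \Gamma_W \ge 2$ almost surely. Following the philosophy behind Theorem \ref{MAIN} and Theorem \ref{LEVEL} — that a large level set forces a large Assouad dimension — I would realise the lower bound through weak tangents: by the characterisation recalled in Section \ref{Pre} it suffices to produce, almost surely, a weak tangent $E$ of $\Gamma_W$ with $\Haus E = 2$, or a sequence of weak tangents whose dimensions approach $2$.

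The construction I would attempt mimics the level-set mechanism. Fixing a time $t_0$ and magnifying $\Gamma_W$ isotropically about $(t_0, W(t_0))$ by a factor $1/R$, Brownian scaling (the increment $W(t_0 + Rs) - W(t_0)$ has the law of $\sqrt{R}\,\tilde W(s)$ for a standard Brownian motion $\tilde W$) shows that the part of the rescaled graph lying in the unit ball is governed by the set of times $s$ with $|\tilde W(s)| \le \sqrt{R}$, that is, by a neighbourhood of the zero set of $\tilde W$, together with the excursions of $\tilde W$ interlacing these times which supply the vertical spread. In the limit this should produce a weak tangent of product type, roughly (rescaled level set) $\times$ (vertical interval), whose Hausdorff dimension is $1$ plus the ``thickness'' of the level set seen at the relevant pair of scales. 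Thus the target value $2$ amounts to exhibiting level sets that are Assouad-thick of dimension $1$ at these scales.

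The engine for this is the extreme inhomogeneity of Brownian zero sets: although every level set has Hausdorff dimension $1/2$, I would argue that at exceptional times and along exceptional sequences of scales the zeros cluster as densely as the ambient line allows, driven by anomalously large occupation (local time). The natural tools are again Brownian scaling together with the strong Markov property, which make the behaviour in disjoint windows sufficiently independent to run a Borel--Cantelli argument over a continuum of candidate locations and scales, producing windows in which the zero set meets almost every fine subinterval while the interlacing excursions still rise to the full height of the window.

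The hard part will be exactly this last coordination. The two requirements pull in opposite directions: densely packed zeros correspond to short excursions and hence small vertical extent, whereas excursions tall enough to fill the window are necessarily sparse, so the separate horizontal and vertical estimates — each comparatively standard — only recover the box-counting value $3/2$ when combined naively. The core of the argument must therefore be a delicate large-deviation estimate showing that, at suitably chosen exceptional scales, a single Brownian path can simultaneously keep its zero set nearly dimension-$1$ dense and its excursions nearly window-high on the \emph{same} window, and that such events occur infinitely often almost surely. Making this simultaneous control rigorous, rather than treating the two directions in isolation, is where I expect the real difficulty to lie.
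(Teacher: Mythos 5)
First, a point of comparison: the paper you were given does not prove this statement at all --- it is quoted verbatim from \cite[Theorem 2.2]{HY} as background motivation, so there is no internal proof to measure your attempt against. Judged on its own terms, your skeleton is the right one (the upper bound $\Assouad\Gamma_W\le 2$ is trivial, the lower bound should come from the extreme spatial inhomogeneity of the level sets via Borel--Cantelli, and the resulting weak tangent is indeed of product type ``level set $\times$ vertical segment''), and you have correctly diagnosed that a naive combination of horizontal and vertical counts only reproduces the box dimension $3/2$.

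However, the mechanism you propose to break the $3/2$ barrier --- a large-deviation estimate making the zero set ``dimension-$1$ dense'' while the interlacing excursions remain window-high --- does not work, and the difficulty you flag as the heart of the matter is not actually there. If you insist on scale pairs with $R/r\to\infty$ (for instance $r=R^{1/\theta}$, as in the Assouad-spectrum argument of Lemma \ref{SC} that you are imitating), then the number of $r$-intervals in a window of length $R$ meeting the zero set is controlled by the local time increment divided by $\sqrt{r}$, and L\'evy's modulus of continuity for local time caps that increment at $O(\sqrt{R\log(1/R)})$ almost surely; so the covering number of the level set is $O((R/r)^{1/2})$ up to logarithms, the required anomalous local-time events are killed by the \emph{first} Borel--Cantelli lemma, and no choice of exceptional scales with diverging ratio gets you past $3/2$. (Consistently with this, the Assouad spectrum of $\Gamma_W$ is $3/2$ for every $\theta\in(0,1)$.) The correct move is to exploit that the definition of Assouad dimension allows \emph{bounded} scale ratios: for each fixed $K$, the event that the level set through the centre of a width-$R$ window meets at least $K/2$ of the $K$ columns of width $r=R/K$ has probability $p(K)>0$ independent of $R$, so by scaling, independence over disjoint windows and the second Borel--Cantelli lemma it occurs for arbitrarily small $R$ almost surely. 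At these scales the tension you worry about evaporates: the oscillation of $W$ over an interval of length $R/K$ is of order $\sqrt{R/K}\gg R$ once $R$ is small, so every column containing a level-set point is automatically crossed from the bottom of the ball to the top, each such column contributes $\asymp K$ boxes, and one gets $N_{R/K}(B(x,R)\cap\Gamma_W)\gtrsim K^2$, i.e.\ $\Assouad\Gamma_W\ge 2$. In short, the dense-zeros and tall-excursions requirements are simultaneously satisfied for free at bounded aspect ratio; the delicate simultaneous large-deviation estimate you plan to prove is both unnecessary and, in the form you state it, false.
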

We have not completely determined the Assouad dimension of any Takagi function yet. We only showed that the Assouad dimension can be strictly larger than the upper box dimension for graphs of Takagi functions. Based on the above theorem we think that the Assouad dimension of all Takagi functions should be $2$.
\begin{conj}
	For $a,b\in\mathbb{R}^+$ and $ab>1$, we have that for all Takagi functions $T_{a,b}$
	\[
	\Assouad \Gamma_{T_{a,b}}=2.
	\]
\end{conj}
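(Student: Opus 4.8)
The upper bound $\Assouad \Gamma_{T_{a,b}} \le 2$ is automatic, since $\Gamma_{T_{a,b}} \subseteq \mathbb{R}^2$ and the Assouad dimension is monotone and bounded by the ambient dimension. The whole content is therefore the lower bound $\Assouad \Gamma_{T_{a,b}} \ge 2$, and the plan is to attack it through weak tangents, exactly as in Lemma \ref{Weaktangent}. Recall (Section \ref{Pre}) that every weak tangent $F$ of $\Gamma_{T_{a,b}}$ satisfies $\Haus F \le \Assouad F \le \Assouad \Gamma_{T_{a,b}}$. Consequently it suffices to produce, for each $\eta>0$, a weak tangent $F_\eta$ with $\Haus F_\eta \ge 2-\eta$: this immediately yields $\Assouad \Gamma_{T_{a,b}} \ge 2-\eta$ for every $\eta$, hence the desired equality. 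So the problem reduces to constructing weak tangents of Hausdorff dimension approaching $2$, i.e.\ blow-ups of the graph that fill a square.

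The structural input is the functional equation $T_{a,b}(x) = T(x) + a\,T_{a,b}(bx)$, whose $n$-fold iterate reads $T_{a,b}(x) = P_n(x) + a^n T_{a,b}(b^n x)$, where $P_n = \sum_{m=0}^{n-1} a^m T(b^m\cdot)$ is piecewise linear with slopes of order $(ab)^n$. On a $b$-adic interval of length $b^{-n}$ the graph is thus an affine image of the whole graph (horizontal contraction $b^{-n}$, vertical contraction $a^n$) sitting on top of a steep linear piece. The feature driving the conjecture is the \emph{anisotropy} $ab>1$: the horizontal and vertical scaling ratios differ, so under an \emph{isotropic} zoom by $b^n$ the local vertical oscillation amplitude grows like $a^n b^n = (ab)^n \to \infty$. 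In the blown-up picture the graph then looks like an ever steeper, ever more oscillatory curve, and within a fixed unit window one sees a large and growing number of nearly vertical strands.

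The construction chooses zoom centres $x_n$ and isotropic scales so that the unit window captures as many of these near-vertical strands as possible. I would arrange the horizontal positions of the strands to be organised along a Cantor set $C_n$ coming from the crossing (level-set) structure of $T_{a,b}$ — this is precisely where Theorem \ref{LEVEL} and its crossing-number estimates enter — while each strand traverses the full height of the window. Passing to a Hausdorff limit, the weak tangent then contains a product set of the form $C \times [0,1]$, and one wants $\Haus C \to 1$ along the sequence so that $\Haus(C \times [0,1]) \to 2$. Verifying that the limit genuinely has Hausdorff dimension close to $2$, rather than merely large box or Assouad dimension, should be done by transporting a Frostman measure from the crossing Cantor sets onto the product and estimating its lower local dimension.

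The main obstacle is precisely the step of forcing $\Haus C \to 1$. The level-set mechanism behind Theorem \ref{MAIN} rests on a Littlewood-polynomial relation for $ab$ and only produces crossing sets of a \emph{fixed} dimension $1/k$, bounded away from $1$; the conjecture, by contrast, concerns \emph{all} parameters with $ab>1$ and demands the full value $2$. A genuinely new idea is therefore needed to show that the horizontal crossing complexity of the rescaled graphs tends to full dimension $1$ for arbitrary $a,b$, and to control the delicate interaction between the steep piecewise-linear part $P_n$ and the embedded inner copy $a^n T_{a,b}(b^n\cdot)$ — especially for non-integer $b$, where the $b$-adic tiling is only approximate. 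I expect this to be the analytic heart of any proof, and it is the reason the statement remains a conjecture.
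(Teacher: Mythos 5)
You are trying to prove a statement that the paper itself does not prove: this is stated as a Conjecture, and the paper explicitly says it has not determined the Assouad dimension of any Takagi function, its strongest result being the lower bound $\Assouad \Gamma_{T_{a,b}} \geq 1+\frac{1}{k}$ of Theorem \ref{MAIN} under the Littlewood-root and integrality hypotheses. So there is no paper proof to compare against; the only question is whether your argument closes the gap, and it does not. Your reduction is sound as far as it goes: the upper bound $\leq 2$ is trivial, and by Theorem \ref{ThKOR} it suffices to exhibit weak tangents of Hausdorff dimension $\geq 2-\eta$; your picture of near-vertical strands organised along a Cantor set, giving a weak tangent containing $C\times[0,1]$, is exactly the weak-tangent reformulation of the paper's squash-and-count mechanism (Lemma \ref{SC}, which yields $\Assouad\Gamma_{T_{a,b}}\geq 1+D$ from a level set of lower box dimension $D$). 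But the step you defer --- forcing $\Haus C_n \to 1$ --- is not a technical verification left to the reader; it is the entire content of the conjecture, and you correctly concede you do not have it.

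Two concrete reasons why this gap is serious. First, the only level-set machinery available (Theorem \ref{LEVEL}) requires $ab$ to be a root of a Littlewood polynomial and $b$ to be an integer, and it produces sets of dimension exactly $\frac{1}{k}$; for $ab>1$ one needs $k\geq 3$, so this route caps at $D\leq\frac{1}{3}$ and the bound $\Assouad\Gamma_{T_{a,b}}\geq\frac{4}{3}$, far from $2$, with no mechanism to let $D$ approach $1$. Second, there is evidence that literal level sets can never do the job: for the classical Takagi function the maximal Hausdorff dimension of a level set is exactly $\frac{1}{2}$ (see \cite{AMO}, cited in the paper), so one should expect uniform upper bounds well below $1$ on level-set dimension in general. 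Your proposal implicitly acknowledges this by moving to ``crossing sets of rescaled graphs'' rather than level sets of $T_{a,b}$ itself, but neither the paper nor your sketch supplies any estimate for such sets, and controlling them for arbitrary real $a,b$ (in particular non-integer $b$, where the self-affine structure is only approximate) is an open problem. In short: your strategy is a reasonable research plan consistent with the paper's framework, but as a proof it stops exactly where the known techniques stop, which is why the statement remains a conjecture.
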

\subsection{Level sets of Takagi functions}
 For more details about the level sets of Takagi functions, see \cite{JL} and \cite{AK}. Notice that if we set $a=0.5, b=2$ then we can find level set of $T_{a,b}$ with Hausdorff dimension at least $0.5$. This is sharp, see \cite{AMO}. For other values of $a,b$ for example $a=(\sqrt{5}+1)/16, b=8$ we see that we can find a level set with Hausdorff dimension at least $1/3$ and we do not know whether this is sharp.
 \begin{ques}
 	What is the largest level set of the Takagi function
 	$
 	T_{\frac{\sqrt{5}+1}{16},8}
 	$
 	in terms of the Hausdorff dimension?
 \end{ques}
\section{Notation}\label{Notation}
\begin{itemize}
	\item[1.] For a real number $x\in\mathbb{R}$ we use the symbol $x^+$ to denote a number $x+\epsilon$ where $\epsilon>0$ is some fixed positive number whose value can be chosen freely and we will point out the specific value of $\epsilon$ when necessary. Similarly, we use $x^-$ for a number smaller but close to $x$.
	\item[2.]For a function $f:\mathbb{R}\to\mathbb{R}$, the following set
	\[
	\Gamma_f=\{(x,y)\in\mathbb{R}^2:x\in [0,1], y=f(x)\},
	\]
	is called the graph of $f$ over the interval $[0,1]$.
	\item[3.] For a real number $x$, we use $\lfloor x\rfloor$ to denote the greatest integer that is not strictly larger than $x$.
\end{itemize}

\section{Preliminaries}\label{Pre}

We will now introduce some notions of dimensions which will be used in this paper. We use $N_r(F)$ for the minimal covering number of a bounded set $F$ in $\mathbb{R}^n$ with balls of side length $r>0$. 
\subsection{Hausdorff dimension}

For any $s>0$ and $\delta>0$ define the following quantity:
\[
\mathcal{H}^s_\delta(F)=\inf\left\{\sum_{i=1}^{\infty}(\mathrm{diam} (U_i))^s: \bigcup_i U_i\supset F, \mathrm{diam}(U_i)<\delta\right\}.
\]
The $s$-Hausdorff measure of $F$ is
\[
\mathcal{H}^s(F)=\lim_{\delta\to 0} \mathcal{H}^s_{\delta}(F),
\]
and Hausdorff dimension of $F$ is
\[
\Haus F=\inf\{s\geq 0:\mathcal{H}^s(F)=0\}=\sup\{s\geq 0: \mathcal{H}^s(F)=\infty          \}.
\]
\subsection{upper box dimension}
The upper box dimension of $F$ is
\[
\overline{\nbox}=\limsup_{r\to 0}\left(-\frac{\log N_r(F)}{\log r}\right).
\]
\subsection{Assouad dimension and weak tangents}
The \textit{Assouad dimension} of $F$ is 
\begin{align*}
\Assouad F = \inf \Bigg\{ s \ge 0 \, \, \colon \, (\exists \, C >0)\, (\forall & R>0)\,  (\forall r \in (0,R))\, (\forall x \in F) \\ 
&N_r(B(x,R) \cap F) \le C \left( \frac{R}{r}\right)^s \Bigg\}
\end{align*}
where $B(x,R)$ denotes the closed ball of centre $x$ and radius $R$.

An important tool for studying the Assouad dimension is \emph{weak tangents} introduced in \cite{MT} and \emph{microsets} in \cite{Fu}. The next definition appeared in \cite[Definition 1.1]{Fr}.

\begin{defn}
	Let $X\in\mathcal{K}(\mathbb{R}^n)$ be a fixed reference set (usually the closed unit ball or cube) and let $E,F\subset\mathbb{R}^n$ be compact sets. Suppose there exists a sequence of similarity maps $T_k:\mathbb{R}^n\to\mathbb{R}^n$ such that $d_\mathcal{H}(E,T_k(F)\cap X)\to 0$ as $k\to 0$. Then $E$ is called a \emph{weak tangent} of $F$.
\end{defn}

Here $(\mathcal{K}(\mathbb{R}^n),d_{\mathcal{H}})$ is a complete metric space with the Hausdorff metric, namely, for two compact subsets $A,B\subset\mathbb{R}^n$ is defined by
\[
d_{\mathcal{H}}(A,B)=\inf\{\delta>0: A\subset B_{\delta}, B\subset A_{\delta}\},
\]
where for any compact set $C\subset\mathbb{R}^n$
\[
C_{\delta}=\{x\in\mathbb{R}^n: |x-y|<\delta \text{ for some } y\in C\}.
\]

For Lemma \ref{Weaktangent} we need  the following  result \cite[Proposition 5.7]{KOR}

\begin{thm}[KOR]\label{ThKOR}
	Let $F$ be a compact set with $\Assouad F=s$. Then there exist a weak tangent $E$ of $F$ such that
	\[
	\Haus E=s.
	\]
	In other words, we have
	\[
	\Assouad F=\max\{\Haus E: E \text{ is a weak tangent of } F\}.
	\]
\end{thm}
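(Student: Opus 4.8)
The plan is to establish the two inequalities
\[
\sup\{\Haus E : E \text{ is a weak tangent of } F\} \le \Assouad F
\quad\text{and}\quad
\Assouad F \le \max\{\Haus E : E \text{ is a weak tangent of } F\},
\]
the second of which simultaneously yields that the supremum is attained.

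First I would dispose of the easy inequality. For any weak tangent $E$ arising from similarities $T_k$ with $d_{\mathcal H}(E, T_k(F)\cap X)\to 0$, I claim $\Assouad E \le \Assouad F$. Indeed, a covering of a ball $B(x,R)\cap E$ by balls of radius $r$ can be transported, up to the distortion controlled by $d_{\mathcal H}$ and the similarity ratios, to a covering of a corresponding ball inside a scaled copy of $F$; since the Assouad bound $N_r(B(x,R)\cap F)\le C(R/r)^s$ is scale invariant (similarities leave the ratio $R/r$ unchanged), it passes to the Hausdorff limit. Combined with the elementary fact that $\Haus E \le \Assouad E$ for every compact set, this gives $\Haus E \le \Assouad F$ for all weak tangents $E$, which is the first inequality.

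The substance lies in the reverse inequality, for which I would construct a weak tangent of large Hausdorff dimension. Write $s=\Assouad F$. Since $s$ is an infimum, for each $t<s$ the defining inequality fails for every constant, so there are points $x_k\in F$ and scales $0<r_k<R_k$ with $R_k/r_k\to\infty$ and
\[
N_{r_k}\big(B(x_k,R_k)\cap F\big) \ge \Big(\frac{R_k}{r_k}\Big)^{t_k}, \qquad t_k\uparrow s.
\]
Letting $T_k$ be the similarity carrying $B(x_k,R_k)$ onto the reference set $X$ and setting $F_k=T_k(F)\cap X$, the set $F_k$ contains at least $(R_k/r_k)^{t_k}$ points that are $(r_k/R_k)$-separated. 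By compactness of $(\mathcal K(\mathbb R^n),d_{\mathcal H})$ we pass to a subsequence along which $F_k\to E$; then $E$ is a weak tangent and the surviving separated points give $N_{\rho_k}(E)\gtrsim \rho_k^{-s}$ along $\rho_k=r_k/R_k\to 0$, so at least $\ubox E\ge s$.

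The hard part, and the main obstacle, is upgrading this single-scale box-type bound to $\Haus E\ge s$: a compact set may carry many separated points along one sequence of scales yet have small Hausdorff dimension. To overcome this I would not settle for the crude limit above but run a nested, multi-scale construction. The key structural input is that the family of weak tangents of $F$ is itself closed under taking weak tangents, so it suffices to produce a weak tangent carrying a Moran-type Cantor subset whose branching is controlled at every level. Fixing $\epsilon_k\to 0$, one selects inductively, inside the rescaled copies, configurations of roughly $\delta_k^{-(s-\epsilon_k)}$ many $\delta_k$-separated balls, each of which—because the Assouad bound is violated cofinally in both location and scale—again contains a near-extremal configuration after rescaling; a diagonal argument over the approximating minisets, together with the compactness above, realises the whole nested object inside a single weak tangent $E$. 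The natural mass distribution on this Cantor set then satisfies a Frostman condition of exponent $s-O(\sum_j\epsilon_j)$, so the mass distribution principle gives $\Haus E\ge s-O(\sum_j \epsilon_j)$; taking $\sum_j\epsilon_j$ as small as desired yields $\Haus E\ge s$. Together with the easy inequality this forces $\Haus E=s$ and shows the maximum is attained.
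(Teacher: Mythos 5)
You should first note that the paper does not prove this statement at all: it is quoted as Theorem~\ref{ThKOR} directly from K\"aenm\"aki--Ojala--Rossi \cite{KOR} (their Proposition 5.7), so the only thing to compare your attempt against is the argument in the literature. Your overall outline --- the easy inequality $\Haus E\le\Assouad E\le\Assouad F$ for every weak tangent, followed by a nested Moran-type construction realised inside a single weak tangent via a diagonal/compactness argument and finished with the mass distribution principle --- is indeed the shape of the known proof, which goes back to Furstenberg's treatment of microsets and galleries \cite{Fu}.

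There is, however, a genuine gap at precisely the step you identify as the crux. You assert that each ball of a near-extremal $\delta_k$-separated configuration ``again contains a near-extremal configuration after rescaling'' because the Assouad bound is violated cofinally in location and scale. This does not follow, and ball-by-ball it is false: the failure of the Assouad inequality only supplies, for each constant $C$, \emph{some} centre $x$ and \emph{some} pair of scales $r<R$ with $N_r(B(x,R)\cap F)\ge C(R/r)^t$; an individual ball $B(y,\delta_k)$ selected from such a configuration may meet $F$ in a single point. Worse, a single two-scale count $N_r(B(x,R)\cap F)\ge (R/r)^t$ does not even guarantee a nested chain of balls with branching roughly $\rho^{-t}$ at \emph{every} intermediate level $\rho$: the tree of intermediate-scale cubes can concentrate all of its branching in the top few levels. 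Converting ``many leaves at the bottom of one deep tree'' into ``a hierarchy with controlled branching at every level, realised in a single limit set'' is the actual mathematical content of the Furstenberg/KOR argument (a regularisation of trees plus a compactness argument over the gallery of minisets), and it is exactly the ingredient your sketch replaces with an unjustified assertion. A smaller point: with level-$k$ branching $\delta_k^{-(s-\epsilon_k)}$ the Frostman exponent of the natural measure is a weighted average of the numbers $s-\epsilon_j$, so one needs only $\epsilon_k\to 0$ rather than $\sum_j\epsilon_j$ small; the bound $s-O\bigl(\sum_j\epsilon_j\bigr)$ as written is both weaker than what the construction gives and vacuous unless the series converges.
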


\subsection{Assouad spectrum}
\begin{defn}[Fraser and Yu \cite{JMF3}]\label{ASP}
	\begin{eqnarray*}
		\Assouad^{\theta} F &=& \inf \bigg\{ \alpha \  : \   (\exists C>0) \, (\exists \rho>0) \, (\forall 0<R\leq \rho) \,  (\forall x \in F) \\ \\
		&\,& \qquad \qquad  \qquad \qquad   N_{R^{1/\theta}} \big( B(x,R) \cap F \big) \ \leq \ C \left(\frac{R}{R^{1/\theta}}\right)^\alpha \bigg\}.
	\end{eqnarray*}
\end{defn}

As $\theta$ ranges in $(0,1)$ the function $\Assouad^{\theta} F$ with respect to $\theta$ is called the \emph{Assouad spectrum} of $F$. For more background introduction of Assouad dimension/spectrum and how they are related to homogeneity of fractal sets see \cite{JMF}, \cite{JMF2} as well as \cite{JMF3}. It is known (\cite[Theorem 1.3]{JMF3}) that for any $\theta\in (0,1)$  we have
\[
\ubox F\leq\Assouad^{\theta} F\leq\Assouad F.\label{Di}\tag{1}
\]

\subsection{Covering by disjoint cubes}

For convenience, in this paper we will count covering number with disjoint squares rather than balls.  We denote $S(a,R)$ for $a\in\mathbb{R}^2, R>0$ as the square centred at $a$ with side length $2R$ whose sides are parallel to the coordinate axis. Since we are dealing with graph of functions, the choice of axis is natural. We denote the following covering number,
\begin{eqnarray*}
& &N(F\cap S(a,R),r)=\bigg|\bigg\{(i,j)\in\mathbb{Z}^2\cap [0,\lfloor R/r \rfloor+1]^2 :\\
& &S((a-R/2+r/2+ir,a-R/2+r/2+jr),r) \cap F\neq\emptyset\bigg\}\bigg|.
\end{eqnarray*}

This is equivalent to $N_r(F\cap B(a,R))$ in the sense that there exists a constant $C>0$ such that for all $a\in F, 0<r<R<1$ we have the following inequality, 
\[
C^{-1}N_r(F\cap B(a,R)) \leq N(F\cap S(a,R),r)\leq C N_r(F\cap B(a,R)).
\]

\subsection{Some properties of Takagi functions}
In this paper we will use the following result whose proof can be found in \cite{HL} and we use the version presented in \cite[Theorem 2.4]{Ba}.

\begin{lma}\label{LL2}
Let $T:\mathbb{R}\to\mathbb{R}$ be a continuous piecewise $C^1$ and periodic function. Then the following function
\[
T_{a,b}(x)=\sum_{n=0}^{\infty} a^nT(b^n x)
\]
must satisfy one of the two properties, 
\item{1}:
$T_{a,b}$ is piecewise $C^1$.
\item{2}:
For a positive constant $C>0$ and any interval $J\subset\mathbb{R}$ we have the following inequality,
\[
\sup_{x,y\in J} |T_{a,b}(x)-T_{a,b}(y)|\geq C|J|^{-\frac{\ln a}{\ln b}}.
\]
Notice that if $a<1,ab>1$ then $-\frac{\ln a}{\ln b}\in (0,1)$ and we see that if $|J|<1$ then
\[
\sup_{x,y\in J} |T_{a,b}(x)-T_{a,b}(y)|\geq C|J|.
\]
\end{lma}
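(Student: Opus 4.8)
The plan is to prove the stated dichotomy via the self-affine functional equation satisfied by $T_{a,b}$. Writing $c=-\frac{\ln a}{\ln b}$, the starting point is the identity
\[
T_{a,b}(x)=T(x)+a\,T_{a,b}(bx),
\]
which follows immediately from reindexing the defining series. Iterating $m$ times gives $T_{a,b}(x)=S_m(x)+a^m T_{a,b}(b^m x)$, where $S_m(x)=\sum_{n=0}^{m-1}a^n T(b^n x)$ is a finite sum of continuous piecewise $C^1$ functions, hence itself continuous and piecewise $C^1$, with derivative $D_m(x)=\sum_{n=0}^{m-1}(ab)^n T'(b^n x)$ away from its finitely many breakpoints. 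The essential difficulty is already visible here: on an interval of length $b^{-m}$ both the smooth part $S_m$ and the self-affine term $a^m T_{a,b}(b^m\cdot)$ oscillate by the same order $a^m=(b^{-m})^c$, so the two contributions compete at the same scale, and the dichotomy records whether they cancel.

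First I would dispatch the degenerate branch. If the partial derivative sums $D_m$ converge uniformly off their breakpoints to a limit with only finitely many discontinuities, then, since $S_m\to T_{a,b}$ uniformly as well, term-by-term differentiation shows $T_{a,b}$ is differentiable with that limit as derivative, hence piecewise $C^1$; this is alternative $1$. (For the tent map $T'\in\{+1,-1\}$, so $D_m(x)$ is a partial sum $\sum_n\epsilon_n(ab)^n$ of Littlewood type, foreshadowing the role of Littlewood polynomials elsewhere in the paper.) The substance of the lemma is that whenever this fails, alternative $2$ must hold.

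For alternative $2$, assuming $T_{a,b}$ is not piecewise $C^1$, I must produce a uniform lower oscillation bound $\sup_{x,y\in J}|T_{a,b}(x)-T_{a,b}(y)|\ge C|J|^c$ for every interval $J$. The engine is self-affinity: on each interval $I_{m,k}=[kb^{-m},(k+1)b^{-m}]$, on which $b^m x$ sweeps a full period, the graph of $T_{a,b}$ is an affine image (contracted by $a^m$ vertically and $b^{-m}$ horizontally) of the graph over one period, plus the smooth correction $S_m$. I would first prove the bound for these intervals, showing that the oscillation of $T_{a,b}$ over $I_{m,k}$ is at least $c_0 a^m$ for a constant $c_0>0$ independent of $m$ and $k$; a general interval $J$ is then sandwiched between intervals $I_{m,k}$ of comparable length, transferring the bound with the loss of only a constant and yielding $\ge C|J|^c$.

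The main obstacle is exactly this base-scale estimate, because of the same-order competition noted above: one must rule out that the correction $S_m$ cancels the self-affine oscillation down to $o(a^m)$ on some interval. This is where the hypothesis ``not piecewise $C^1$'' must be used quantitatively. The plan is to show that failure of uniform convergence of $D_m$ forces, for some fixed $\delta>0$, a second difference of $T_{a,b}$ of size $\ge\delta a^m$ somewhere in every period, and then to propagate this non-cancellation uniformly across all scales and positions via the functional equation, for instance by tracking the normalized quantities $\Omega_m=a^{-m}\min_k\sup_{x,y\in I_{m,k}}|T_{a,b}(x)-T_{a,b}(y)|$ and establishing $\inf_m\Omega_m>0$ under the non-degeneracy assumption. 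Controlling this competition between two contributions of identical order is the crux; once it is in place, it combines with the elementary Hölder upper bound (obtained by splitting the defining series at scale $b^{-m}$) to pin down the oscillation, and hence the box dimension, of $\Gamma_{T_{a,b}}$.
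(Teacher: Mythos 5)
First, note that the paper does not prove this lemma at all: it is quoted from Hu--Lau \cite{HL} in the form presented in \cite[Theorem 2.4]{Ba}, so there is no in-paper argument to match your proposal against; the comparison has to be with the proof in those references.

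Your write-up correctly identifies the right framework (the functional equation $T_{a,b}(x)=T(x)+aT_{a,b}(bx)$, the decomposition $T_{a,b}=S_m+a^mT_{a,b}(b^m\cdot)$, and the fact that on an interval of length $b^{-m}$ the smooth part and the self-affine part both oscillate at the same order $a^m=(b^{-m})^{-\ln a/\ln b}$), but it stops exactly at the point where the theorem actually lives. The entire content of the dichotomy is the implication ``not piecewise $C^1$ $\Rightarrow$ uniform lower oscillation bound at every interval and every scale,'' and your treatment of it consists of announcing a plan (``the plan is to show\dots,'' ``establishing $\inf_m\Omega_m>0$ under the non-degeneracy assumption,'' ``controlling this competition \dots is the crux; once it is in place\dots'') without executing it. In particular, nothing in the proposal rules out the scenario where the oscillation over some sequence of intervals $I_{m_i,k_i}$ is $o(a^{m_i})$ while $T_{a,b}$ still fails to be piecewise $C^1$; the step from a \emph{qualitative} hypothesis (non-smoothness) to a \emph{quantitative, uniform-in-location-and-scale} lower bound is precisely what must be proved. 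The standard route (as in \cite{HL}) is a rigidity/compactness argument: if the normalized oscillations degenerate along a subsequence, one extracts from the self-affine structure a piecewise $C^1$ solution $\phi$ of the cohomological equation $T(x)=\phi(x)-a\phi(bx)$, which forces $T_{a,b}=\phi$ to be piecewise $C^1$, contradicting the assumption. Some such rigidity mechanism is indispensable and is absent from your argument, so as it stands the proposal is an outline rather than a proof. (A smaller issue: your sufficient condition for alternative 1 --- uniform convergence of $D_m$ to a limit with finitely many discontinuities --- is essentially never checkable here since the breakpoint set of $S_m$ grows with $m$; but this branch is not where the difficulty lies.)
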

\begin{rem}
When $T$ is the tent map defined in the beginning of the first section, it is known that when $a<1,ab\geq 1$, the function $T_{a,b}$ is nowhere differentiable therefore only the second property of lemma \ref{LL2} can be true.
\end{rem}

\section{Large level sets, proof of Theorem \ref{LEVEL}}
We show that there exist large level sets for function $T_{a,b}$ with certain parameters $a,b$. Since $ab$ is a root of a Littlewood polynomial we see that
\[
\sum_{i=0}^k {\epsilon_i} (ab)^i=0
\]
for an integer $k\geq 1$ and some choice of $\epsilon_i\in \{\pm 1\}$.
Next we consider the first $k$ terms partial sum
\[
F_1(x)=\sum_{n=0}^{k-1}a^nT(b^n x).
\]
The derivative of the above function is not continuous at $x=m b^{-k}$ for integers $m$. Let us now assume that $x$ is an irrational number then the derivative is 
\[
F'_1(x)=\sum_{i=0}^{k-1} \epsilon_i(x) (ab)^i,
\]
where $\epsilon_i(x)\in \{\pm 1\}$ depends on the $b$-nary expansion of $x$. In particular, if
\[
x=0.b_1b_2\dots
\]
then 
\[
\epsilon_i(x)=
\begin{cases}
1 & b_i\in [0,b/2] \\
-1 & b_i\in (b/2,1]
\end{cases}
\]
Therefore we can find at least $2$ disjoint intervals of length $\frac{1}{2b^{k-1}}$ where $F_1'(x)=0$ and $F_1(x)=a_1$ is for a constant $a_1\geq 0$ on those two intervals. Indeed, when
\[
\sum_{i=0}^{k-1} {\epsilon_i} (ab)^i=0,
\]
we also have
\[
-\sum_{i=0}^{k-1} {\epsilon_i} (ab)^i=0.
\]
So there are at least two intervals we can find and the union is symmetric with respect to the line $\{x=0.5\}$. Then because $F_1$ is also symmetric with respect to the line $\{x=0.5\}$ we see that $F_1(.)$ takes the same value on those two intervals, say, $I_1$ and $I_2$.

We consider the next $k$ terms sum
\[
F_2(x)=\sum_{n=k}^{2k-1}a^nT(b^n x).
\]
Then we can find $b$ many intervals of length $1/2b^{2k-1}$ in $I_1,I_2$ such that the above sum stays constant $a_2\geq 0$ on those intervals. To see this, consider $I_1$, which is an interval of length $1/2b^k$. Now observe the following
\[
F_2(x)=\sum_{n=k}^{2k-1}a^nT(b^n x)=\sum_{n=0}^{k}a^{n+k}T(b^{k}b^n x)=a^{k} F_1(b^k x).
\]
Therefore the graph of $F_2$ is an affine copy, or intuitively speaking, a narrowed version of the graph of $F_1$. Then we see that there are exactly $b$ many intervals in $I_1$ of length $1/2b^{2k-1}$ such that $F_2$ equals to $a_2$ on all those intervals. Indeed, over any interval the form $[l/b^{k-1},(l+1)/b^{k-1}]$ the graph of $F_2$ has $2b$ many platforms of the same level. That is to say, we can find $2b$ many $1/2b^{2k-1}$ length intervals on which $F_2(x)=a_2$. Since $I_1$ is only a half $1/b^{k-1}$ length interval, therefore we can find $b$ many platforms over $I_1$. Here we used the mirror symmetry of $F_2$.

We can apply the above argument to $j$-th $k$-terms partial sums for each $j\geq 2$ and as a result we can find a Cantor set $C$ such that $T_{a,b}(C)=\{c\}$ for a constant $c$. This Cantor set is a self-similar set satisfying open set condition and its Hausdorff dimension is $1/k$ (the contraction ratio is $1/b^k$ and branching number is $b$, see for example \cite[Theorem 9.3]{Fa}). Thus we have proved theorem \ref{LEVEL}.
\section{Squashing and counting, proof of Theorem \ref{MAIN}}\label{SAC}
In order to deal with the Assouad dimension of graphs of Takagi functions we need to handle the following quantity

\[
N(S(x,R/2)\cap\Gamma_{T_{a,b}},r).
\]

The situation is not too bad when we want to deal with the above quantity for $\Gamma_{f+g}$ with one of the functions, say $f$, is Lipschitz continuous. The following result is a localized and quantitative version of \cite[Lemma 2.1,2.2]{FF}.
\begin{lma}\label{ADD}
If $f:[0,1]\to\mathbb{R}$ is Lipschtz continuous with Lipschitz constant $M>0$ and $g:[0,1]\to\mathbb{R}$ is continuous then we have the following inequality for $0<r<R<1$ whenever $\frac{R}{r}$ is an integer,
\begin{eqnarray*}
& &\sup_{a\in [0,1]\times\mathbb{R}}N(S(a,R/2)\cap\Gamma_{f+g},r)\geq \\ & &\frac{1}{M+2}\sup_{a\in [0,1]\times\mathbb{R}}N(S(a,R/2)\cap\Gamma_{g},r)-\frac{M+2}{\lfloor M\rfloor+2}\frac{R}{r}.
\end{eqnarray*}
\end{lma}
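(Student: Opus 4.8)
The plan is to reduce the planar box-count to a one-dimensional, column-by-column count of oscillations, and then to exploit that adding a Lipschitz function perturbs each column's oscillation by at most $Mr$ while spreading the whole graph vertically by at most $MR$.

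First I would fix $R,r$ with $R/r\in\mathbb{Z}$ and choose a centre $a=(a_1,a_2)$ attaining $N_g:=\sup_a N(S(a,R/2)\cap\Gamma_g,r)$ (the count is integer-valued and the graph over $[0,1]$ is compact, so the supremum is attained). I would then slice the base interval $[a_1-R/2,a_1+R/2]$ into the $R/r$ columns $J_1,\dots,J_{R/r}$ of width $r$. Writing $\hat\omega_j(h)$ for the length of the portion of $[\min_{J_j}h,\max_{J_j}h]$ lying in the vertical window $[a_2-R/2,a_2+R/2]$, the fact that the graph over each column is a connected arc gives, for the number $v_j$ of $r$-rows of the square met by $\Gamma_g$ over $J_j$, the bound $v_j\le \hat\omega_j(g)/r+2$; summing yields $\sum_j \hat\omega_j(g)/r\ge N_g-2R/r$. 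This is the routine conversion between covering numbers and oscillations.

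The next, and main, step is to transfer this mass to $f+g$ while keeping it confined. Because $f$ is $M$-Lipschitz it oscillates by at most $Mr$ on each $J_j$ and by at most $MR$ over the whole strip; the latter shows that the part of $\Gamma_{f+g}$ lying above points where $g$ is in the window is contained in a horizontal band $B$ of height at most $(M+1)R$. For the lower bound inside $B$ I would, in each column, take $x_1,x_2\in J_j$ realising the extreme in-window values of $g$, note that $(f+g)(x_2)-(f+g)(x_1)\ge \hat\omega_j(g)-Mr$, and apply the intermediate value theorem to $f+g$ to conclude that $\Gamma_{f+g}$ sweeps, inside $B$, a $y$-interval of this length over $J_j$. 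Hence the number $w_j$ of $r$-rows of $B$ met there satisfies $w_j\ge \hat\omega_j(g)/r-M-O(1)$, and summing over columns gives that $\Gamma_{f+g}$ meets at least $N_g-(M+2)R/r$ of the $r$-boxes lying over the base strip within the band $B$.

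Finally I would partition $B$ into $\lfloor M\rfloor+2$ vertical windows of height $R$ aligned to the $r$-grid (legitimate because $R/r\in\mathbb{Z}$), so that every $r$-box lies in exactly one window and each window together with the base strip is a square $S(a',R/2)$. Pigeonholing the at least $N_g-(M+2)R/r$ boxes among these $\lfloor M\rfloor+2$ windows produces one square carrying at least $\tfrac{1}{\lfloor M\rfloor+2}\bigl(N_g-(M+2)R/r\bigr)$ of them; since $\lfloor M\rfloor+2\le M+2$ this is at least $\tfrac{1}{M+2}N_g-\tfrac{M+2}{\lfloor M\rfloor+2}\tfrac{R}{r}$, which is the claim. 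The hard part is the confinement argument of the third paragraph: adding a Lipschitz function does not destroy complexity but redistributes it over a band of height comparable to $MR$, so a naive full-height column count cannot be pigeonholed; one must work with the in-window oscillation and invoke the intermediate value theorem to be sure that enough complexity genuinely survives inside a band of bounded height, and then track the additive $R/r$ error terms carefully enough to land on the stated constants.
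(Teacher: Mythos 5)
Your proof is correct and follows essentially the same route as the paper's: slice the square into width-$r$ columns, convert the box count for $\Gamma_g$ into per-column oscillations, observe that the $M$-Lipschitz perturbation costs at most $Mr$ of oscillation per column while confining the relevant part of $\Gamma_{f+g}$ to a band of height $(M+1)R$, and pigeonhole over $\lfloor M\rfloor+2$ stacked $R$-squares. Your use of the in-window oscillation $\hat\omega_j$ together with the intermediate value theorem is in fact a slightly more careful rendering of the step the paper handles by shearing $\Gamma_g\cap S(a,R/2)$ by $f$, and the constants land on the stated bound as you indicate.
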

\begin{rem}\label{ADDr}
For the case when $R/r$ is not an integer, we can replace $r$ with a larger value
\[
r'=R \frac{1}{\lfloor R/r \rfloor}.
\]
As we will eventually choose $R/r$ to be arbitrarily large, $r'$ and $r$ are essentially the same. For any $\delta>0$, if $R/r$ is large enough the following relation holds
\[
(1-\delta)r<r'<(1+\delta)r.
\]
Then the inequality of this theorem holds with $1/(M+2)$ being replaced by some other constant which depends only on $M$.
\end{rem}

\begin{proof}[Proof of lemma \ref{ADD}]
Let $a\in\mathbb{R}^2$ and consider the square $S(a,R/2)$. For any $r<R$ we consider the following rectangles for $i=0,1,\dots,\frac{R}{r}-1$
\[
S_i=[a-R/2+ir,a-R/2+(i+1)r]\times [a-R/2,a+R/2]\subset S(a,R/2).
\]
Each rectangle $S_i$ contains the following squares for $j=0,1,\dots,\frac{R}{r}-1$
\[
S_{ij}=[a-R/2+ir,a-R/2+(i+1)r]\times [a-R/2+jr,a-R/2+(j+1)r].
\]
Now if $S_{ij}\cap\Gamma_{g}\neq\emptyset$ we colour it black, otherwise we colour it white. Let $n_i\geq 0$ denote the number of black squares among $S_{ij},j=0,1,2,\dots,\frac{R}{r}-1$. By continuity of $g$, the fact that we have $n_i$ black squares implies the following inequality
\[
\sup_{x,y\in [a-R/2+ir,a-R/2+(i+1)r]} |g(x)-g(y)|\geq (n_i-2)r.
\]
By Lipschitz property of function $f$ we see that
\[
|f(x)-f(y)|\leq M|x-y|,
\]
this implies that
\[
\sup_{x,y\in [a-R/2+ir,a-R/2+(i+1)r]} |f(x)-f(y)|\leq Mr.
\]
Then we see that
\[
\sup_{x,y\in  [a-R/2+ir,a-R/2+(i+1)r]} |f(x)+g(x)-f(y)-g(y)|\geq (n_i-2-M)r.
\]
So we see that to cover the set
\[
\{(x,y+f(x))\in\mathbb{R}^2 : (x,y)\in S_i\cap\Gamma_{g}\}
\]
 we need at least
\[
n_i-2-M
\]
many squares of side length $r$. Summing over all $i$ we see that to cover the set
\[
\{(x,y+f(x))\in\mathbb{R}^2 : (x,y)\in S(a,R/2)\cap\Gamma_{g}\}
\]
we need at least
\[
\sum_{i} n_i-(M+2)\frac{R}{r}
\]
many squares with side length $r$.

The next fact to notice is that the following set
\[
\{(x,y+f(x))\in\mathbb{R}^2 : (x,y)\in S(a,R/2)\cap\Gamma_g\}
\]
is contained in a $R\times (M+1)R$ rectangle. This rectangle can be covered by $\lfloor M\rfloor+2$ squares with side length $R$, so for at least one of the $\lfloor M\rfloor+2$ squares need at least
\[
\frac{\sum_{i} n_i-(M+2)\frac{R}{r}}{\lfloor M\rfloor+2}
\]
many squares with side length $r$ to cover.

It is then easy to see that
\[
\{(x,y+f(x))\in\mathbb{R}^2 : (x,y)\in S(a,R/2)\cap\Gamma_{g}\}\subset \Gamma_{f+g}.
\]
This implies that
\[
\sup_{a'\in [0,1]\times\mathbb{R}}N(S(a',R/2)\cap\Gamma_{f+g},r)\geq
\frac{\sum_{i} n_i-(M+2)\frac{R}{r}}{\lfloor M\rfloor+2},
\]
and since  $\frac{R}{r}$ is integer we see that
\[
\sum_{i} n_i=N(S(a,R/2)\cap\Gamma_{g},r).
\]
It follows that
\[
\sup_{a'\in [0,1]\times\mathbb{R}}N(S(a',R/2)\cap\Gamma_{f+g},r)\geq \frac{1}{M+2}N(S(a,R/2)\cap\Gamma_{g},r)-\frac{M+2}{\lfloor M\rfloor+2}\frac{R}{r},
\]
We can take the supreme of $a$ on the right hand side of the above inequality and the lemma concludes.
\end{proof}

Now we can move on dealing with the Assouad dimension of Takagi functions. We shall need the following lemma.

\begin{lma}\label{POWERBOUND}
	For any number $1<\beta<2$ and all integer $k$, there exists a sequence $\epsilon_n$ of $\pm 1$ such that $|\sum_{n=0}^{k}\epsilon_n \beta^n| \leq \frac{1}{\beta-1}$.
\end{lma}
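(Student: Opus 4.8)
The plan is to construct the signs by a greedy procedure that processes the powers from the largest exponent down to the smallest, at each stage choosing the sign that pushes the running sum back towards the origin. Concretely, I would set $U_j=\sum_{n=j}^{k}\epsilon_n\beta^n$ and build the numbers $U_j$ downwards from $j=k$ to $j=0$ via the recursion $U_j=U_{j+1}+\epsilon_j\beta^j$, where $\epsilon_j$ is chosen to be $-1$ when $U_{j+1}\ge 0$ and $+1$ otherwise (so that $|U_j|$ is as small as possible given $U_{j+1}$). The target quantity is then $U_0$.

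The heart of the argument is the invariant $|U_j|\le\frac{\beta^j}{\beta-1}$, which I would prove by downward induction on $j$. The base case $j=k$ reads $|U_k|=\beta^k\le\frac{\beta^k}{\beta-1}$, and this is exactly where the hypothesis $\beta<2$ enters: it is equivalent to $\frac{1}{\beta-1}\ge 1$. For the inductive step, assume $|U_{j+1}|\le\frac{\beta^{j+1}}{\beta-1}$. If $|U_{j+1}|\ge\beta^j$, the greedy choice gives $|U_j|=|U_{j+1}|-\beta^j$, and the identity $\frac{\beta^{j+1}}{\beta-1}-\beta^j=\frac{\beta^j}{\beta-1}$ closes the step; if $|U_{j+1}|<\beta^j$, then $|U_j|=\beta^j-|U_{j+1}|<\beta^j\le\frac{\beta^j}{\beta-1}$, again using $\frac{1}{\beta-1}\ge 1$. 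Taking $j=0$ yields $|U_0|=\bigl|\sum_{n=0}^{k}\epsilon_n\beta^n\bigr|\le\frac{1}{\beta-1}$, which is the claim.

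I do not expect a serious obstacle here; the only thing to get right is the choice of invariant. The naive bound one might first write, controlling the sum by the full geometric series $\sum_{n=0}^{k}\beta^n=\frac{\beta^{k+1}-1}{\beta-1}$, grows with $k$ and is useless, so the point is that processing from the top and cancelling keeps the scale of $|U_j|$ comparable to the current leading term $\beta^j$ rather than to the total mass of all terms. The factor $\frac{1}{\beta-1}$ in the statement is then seen to be the residue of this cancellation at the bottom of the recursion, and the restriction $\beta<2$ is precisely what makes the base case, and hence the whole induction, go through.
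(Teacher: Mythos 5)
Your proof is correct and is essentially the paper's argument in different clothing: the paper runs the greedy sign choice through the IFS $f_{\pm1}(x)=\beta x\pm1$ applied to $0$, keeping the Horner-style trajectory inside $(-\tfrac{1}{\beta-1},\tfrac{1}{\beta-1})$, and that trajectory is exactly your normalized tail sum $U_j/\beta^j$, with your invariant $|U_j|\le\beta^j/(\beta-1)$ being the same bound after rescaling. The only (immaterial) difference is the greedy rule itself — you always step toward the origin, while the paper applies $f_1$ unless doing so would overshoot — and both hinge on $\beta\le 2$ in the same place.
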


\begin{proof}
	Let $k>0$ be an integer, consider the following two functions forming an IFS (known as iterated function system, see for example \cite[chapter 9]{Fa}),
	
	\[
	f_{-1}(x)=\beta x-1,
	f_{1}(x)=\beta x+1.
	\]
	Then for any sequence $\epsilon_n\in\{\pm 1\}$ with $n=0,1,2,\dots,k-1$ we can define an iteration by
	\[
	f_{\epsilon_{0}}\circ\dots\circ f_{\epsilon_{k-2}}\circ f_{\epsilon_{k-1}},
	\]
	and notice that	
	\[
	f_{\epsilon_{0}}\circ\dots\circ f_{\epsilon_{k-2}}\circ f_{\epsilon_{k-1}}(0)=\sum_{n=0}^{k-1}\epsilon_n \beta^n.
	\]
	So as long as we can find an iteration of this IFS such that the trajectory of $0$ stays bounded by $\frac{1}{\beta-1}$ the existence of a sequence $\epsilon_n$ will follow. Now since $f_1(0)=1<\frac{1}{\beta-1}$ which is the intersection of the line $y=f_{1}(x)$ and $y=x$, we can apply function $f_{1}$ before the value exceeds $\frac{1}{\beta-1}$ and apply $f_{-1}$ before the value drops below $-\frac{1}{\beta-1}$. More precisely, we put $x_0=0$ and if we find $x_i\in (-1/(\beta-1),1/(\beta-1))$ then if $f_1(x_i)<1/(\beta-1)$ we set $x_{i+1}=f_1(x_i)$ otherwise we set $x_{i+1}=f_{-1}(x_i)$. We need to check $x_{i+1}\in (-1/(\beta-1),1/(\beta-1))$ as well. In fact, if $x_{i+1}>1/(\beta-1)$ then $f_{-1}(x_i)>1/(\beta-1)$ and this implies that $x_i>1/(\beta-1)$. If $x_i<-1/(\beta-1)$ then either $f_1(x_i)<-1/(\beta-1)$ or $f_{-1}(x_i)<-1/(\beta-1)$ in the first case we have $x_i<-1/(\beta-1)$. The later case implies that $x_i<(2-\beta)/\beta(\beta-1)$ but then $f_1(x_i)<1/(\beta-1)$. So in any case $x_{i+1}\in (-1/(\beta-1),1/(\beta-1))$ as required. This procedure gives us an sequence $\epsilon_n$, with $n=0,1,2,\dots,k-1$ for any integer fixed $k-1$ such that
	
	\[
	\left|\sum_{n=0}^{k-1}\epsilon_n \beta^n\right| \leq \frac{1}{\beta-1}.
	\]
\end{proof}

Notice that when $\beta$ is a root of a Littlewood polynomial it is necessary that $0.5\leq |\beta|\leq 2$. Therefore if parameters of $T_{a,b}$ are as stated in Theorem \ref{MAIN} then $1\leq ab\leq 2$ and therefore the result of Lemma \ref{POWERBOUND} holds for $ab$. Now we have all the ingredients needed to prove Theorem \ref{MAIN} however we find it convenient to introduce the following general result.

\begin{lma}\label{SC}[Squash and count]
Suppose $T_{a,b}(x):\mathbb{R}\to\mathbb{R}$  is a function of the following form
\[
T_{a,b}(x)=\sum_{n=0}^{\infty}a^nT(b^n x),
\]
where $T(x):\mathbb{R}\to\mathbb{R}$ is a piecewise $C^1$ continuous function with period $1$ and $a>0,b>0,ab>1$. Suppose the following two conditions holds:
\item[1, \emph{(interval with slow changing)}]: There exists a positive constant $C_1>0$ such that  for any integer $M>0$, there is a integer $k$ such that $J_k=(\frac{k}{b^{M+1}},\frac{k+1}{b^{M+1}})$ and for all $x_1,x_2\in J_k$ the following condition holds,
\[
\bigg|\sum_{n=0}^{M}a^nT(b^n x_1))-\sum_{n=0}^{M}a^nT(b^n x_2))\bigg|<C_1|x_1-x_2|.
\]
\item[2, \emph{(large level set)}]:
There exists a level set $L\subset [0,1]$ with lower box dimension at least $D$, namely,
\[
\exists y\in\mathbb{R}, \lbox L(y)=\lbox\{x\in [0,1]: T_{a,b}(x)=y\}\geq D.
\]
Then we have the following result,
\[
\Assouad \Gamma_{T_{a,b}}\geq D+1.
\]
\end{lma}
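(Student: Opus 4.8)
The plan is to exploit the self-affine structure of $T_{a,b}$ together with the two hypotheses: condition 1 supplies a \emph{flat} (slowly changing) interval on which the initial partial sum is Lipschitz, condition 2 supplies a horizontal direction of box dimension $D$, and the missing $+1$ in the exponent comes from the \emph{steepness} of $T_{a,b}$ recorded in Lemma \ref{LL2}. Throughout I abbreviate $\alpha=-\frac{\ln a}{\ln b}\in(0,1)$. First I would set up the decomposition: for a large integer $M$, splitting the defining series gives, for every $x$,
\[
T_{a,b}(x)=\sum_{n=0}^{M}a^nT(b^nx)+a^{M+1}T_{a,b}(b^{M+1}x)=:F_1(x)+a^{M+1}T_{a,b}(b^{M+1}x).
\]
By condition 1 there is an interval $J_k=(kb^{-(M+1)},(k+1)b^{-(M+1)})$ of length $w=b^{-(M+1)}$ on which $F_1$ is $C_1$-Lipschitz. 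Rescaling the part of $\Gamma_{T_{a,b}}$ lying over $J_k$ isotropically by the factor $b^{M+1}$ (a similarity, hence harmless for covering numbers) turns it into the graph $\Gamma_{\phi+G}$ over a unit interval, where $\phi$ is again $C_1$-Lipschitz and $G(u)=(ab)^{M+1}T_{a,b}(u)$ with $u=b^{M+1}x$. In the setting of Theorem \ref{MAIN} the base $b$ is an integer, so $T_{a,b}$ is $1$-periodic and this vertically amplified copy genuinely contains a translate of the level set $L(y)$; this is the only place periodicity enters.

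Next I would remove the Lipschitz term and perform the core count. Applying Lemma \ref{ADD} with $f=\phi$ (Lipschitz constant $C_1$) and $g=G$ reduces the count for $\Gamma_{\phi+G}$ to the count for $\Gamma_G$, at the cost of the factor $1/(C_1+2)$ and an additive error of order $R/r$; by Remark \ref{ADDr} the integrality requirement on $R/r$ is cosmetic. It then remains to count $\Gamma_G$ inside a unit window $S(a_0,1/2)$ centred at height $(ab)^{M+1}y$. In the horizontal direction, every $r$-column meeting the level set of $G$ contributes, and there are at least $N_r(L(y))\geq r^{-(D-\epsilon)}$ of them, using $\lbox L(y)\geq D$. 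In the vertical direction, Lemma \ref{LL2} gives oscillation $\geq C\,r^{\alpha}$ of $T_{a,b}$ over any $r$-interval, hence oscillation $\geq C(ab)^{M+1}r^{\alpha}$ of $G$; once this exceeds the unit window height, i.e. once $r\geq r^{\ast}:=(C(ab)^{M+1})^{-1/\alpha}$, the graph over each such column sweeps the whole window and meets $\gtrsim 1/r$ distinct $r$-rows. Since distinct columns give disjoint squares, multiplying the two counts yields
\[
N\big(S(a_0,1/2)\cap\Gamma_G,r\big)\gtrsim r^{-(D-\epsilon)}\cdot r^{-1}=r^{-(D+1-\epsilon)}\qquad(r^{\ast}\leq r\leq r_0(\epsilon)).
\]

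Finally I would collect the estimates and let $M\to\infty$. Feeding the last display into Lemma \ref{ADD} gives a window with $N(S(a',1/2)\cap\Gamma_{\phi+G},r)\gtrsim r^{-(D+1-\epsilon)}$ for small $r$, the main term dominating the $O(r^{-1})$ error because $D+1-\epsilon>1$. Undoing the rescaling, this is a genuine estimate $N_{r_{\mathrm{orig}}}\big(S(p,w/2)\cap\Gamma_{T_{a,b}}\big)\gtrsim (w/r_{\mathrm{orig}})^{D+1-\epsilon}$ with $r_{\mathrm{orig}}=rw$; taking $R=w$ and $r=r^{\ast}$ produces a window realising the ratio $R/r_{\mathrm{orig}}=1/r^{\ast}\to\infty$ as $M\to\infty$, since $r^{\ast}\to 0$. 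As this holds for every $\epsilon>0$, I would conclude $\Assouad\Gamma_{T_{a,b}}\geq D+1-\epsilon$ for all $\epsilon>0$, hence $\Assouad\Gamma_{T_{a,b}}\geq D+1$.

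I expect the main obstacle to be the quantitative vertical filling and the matching of scales. One must simultaneously keep $r$ small enough for the level-set bound $N_r(L(y))\geq r^{-(D-\epsilon)}$ to hold and large enough ($r\geq r^{\ast}$) for the amplified oscillation from Lemma \ref{LL2} to cover the full height of the window, and then verify that the admissible range $[r^{\ast},r_0(\epsilon)]$ is nonempty with $r^{\ast}\to 0$ as $M\to\infty$, so that $R/r$ genuinely tends to infinity while the additive error from Lemma \ref{ADD} stays negligible. The use of \emph{lower} box dimension in condition 2 is essential here, as it is exactly what guarantees the level-set bound at the prescribed scale $r=r^{\ast}$ rather than along some uncontrolled subsequence.
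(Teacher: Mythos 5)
Your proposal is correct and follows essentially the same route as the paper: the same decomposition $T_{a,b}=F_M+G_M$ via the self-affine tail, condition 1 plus Lemma \ref{ADD} to discard the Lipschitz part, the level set for the horizontal column count and the oscillation bound of Lemma \ref{LL2} for the vertical filling of each column. The only cosmetic differences are that you rescale isotropically rather than working with the anisotropically squashed graph $\Gamma_{G_M}$, and you take the inner scale $r^{\ast}$ at which the amplified oscillation fills the window, whereas the paper takes $r=(ab^2)^{-(M+1)}$ so that the count lands exactly at $\theta=1/B$ and additionally yields the Assouad-spectrum bound $\Assouad^{1/B}\Gamma_{T_{a,b}}\geq 1+D$; both choices give $R/r\to\infty$ with the same exponent $1+D^-$.
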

\begin{proof}
For any positive integer $M>0$, we can find an integer $k$ and $x_0=\frac{k}{b^{M+1}}$ such that $(x_0,x_0+\frac{1}{b^{M+1}})\subset [0,1]$ and on this subset we have the following condition for the oscillation of the following $M$-th partial sum
\[
\bigg|\sum_{n=0}^{M}a^nT(b^n x_1))-\sum_{n=0}^{M}a^nT(b^n x_2))\bigg|<C_1|x_1-x_2|,
\]
where $x_1,x_2\in (x_0,x_0+\frac{1}{b^{M+1}})$. Then we can write that

\begin{eqnarray*}
T_{a,b}(x)=\sum_{n=0}^{\infty}a^nT(b^n x)&=&\sum_{n=0}^{M}a^nT(b^n x)+\sum_{n=M+1}^{\infty}a^nT(b^n x)\\
&=& F_M(x)+G_M(x),
\end{eqnarray*}
where the functions $F_M,G_M$ are the first sum and second sum in the third expression. Then we see that the graph $\Gamma_{G_M}$ is actually a 'squashed' version of $\Gamma_{T_{a,b}}$, namely we have the following relation
\[
X_M(\Gamma_{T_{a,b}})=\Gamma_{G_M},
\]
where the linear transformation $X_M:\mathbb{R}^2\to \mathbb{R}^2$ is defined to be as follows
\[
X_M(x,y)=\left(\frac{x}{b^{M+1}},a^{M+1}y,\right).
\]

We see that since $ab>1$ this linear transformation squashes a square to a very thin rectangle for large enough $M$. Now we concentrate on the strip
\[
S=\left(x_0,x_0+\frac{1}{b^{M+1}}\right)\times\mathbb{R}.
\]
The graph $\Gamma_{G_M}$ over this strip is the squashed version of the graph $\Gamma_{T_{a,b}}$ over $[0,1]$. We want to find a $\frac{1}{b^{M+1}}$-square contained in this strip such that we need a reasonably large amount of $r$-squares to cover $\Gamma_{G_M}$, where $r>0$ is number that will be specified later. Now because of the bijective linear map $X_M$, covering a $\frac{1}{b^{M+1}}$-square with $r$-square in $\Gamma_{G_M}$ is the same thing as covering the original graph $\Gamma_{T_{a,b}}$ over $[0,1]$ inside a $1\times\frac{1}{(ab)^{M+1}}$ -rectangle with $rb^{M+1}\times\frac{r}{a^{m+1}}$-rectangles.

Now consider the level set $L$ with lower box dimension $D$ mentioned in the second condition. For some real number $y'$ we have that

\[
L=L(y')=\{x\in [0,1]: T_{a,b}(x)=y'\}\times\{y'\},
\]
then we do box counting in the  $1\times\frac{1}{(ab)^{M+1}}$-rectangle containing $L$. For any $(x,y')\in L$ the graph $\Gamma_{T_{a,b}}$ intersects the middle axis of the rectangle at $(x,y')$, so by lemma \ref{LL2} we see that there exists a positive constant $C_3>0$ such that the projection of graph $\Gamma_{T_{a,b}}$ inside any $\frac{1}{(ab)^{M+1}}$-square centred in $L$ to the vertical axis has length at least
\[
\min\bigg(C_3\frac{1}{(ab)^{M+1}},\frac{1}{(ab)^{M+1}}\bigg).
\]
If $M$ is large enough we need at least $\left(\frac{1}{ab}\right)^{-D^-(M+1)}$ many $\frac{1}{(ab)^{M+1}}$-square to cover this  $1\times\frac{1}{(ab)^{M+1}}$-rectangle because for covering the level set $L$ we already need that many squares. Since each square is sufficiently occupied by $\Gamma_{T_{a,b}}$ in the sense that the curve occupies at least $\min(C_3,1)$ portion of the vertical length. This means that for each such square we need some constant times
\[
\frac{1/(ab)^{M+1}}{r/a^{M+1}}=\frac{1}{rb^{M+1}}
\]
many  $rb^{M+1}\times\frac{r}{a^{m+1}}$-rectangle to cover. Now we choose the following value for $r$
 \[
r=\frac{1}{(ab^2)^{M+1}}=\left(\frac{1}{(ab)^{M+1}}\right)^{\frac{\ln ab^2}{\ln ab}}=\left(\frac{1}{(ab)^{M+1}}\right)^{\frac{B}{B-1}}=\left(\frac{1}{b^{M+1}}\right)^B,
\]
where $B=2+\frac{\ln a}{\ln b}$ is the upper box dimension of $\Gamma_{T_{a,b}}$. We denote $R=\frac{1}{b^{M+1}}$ and note that $r=R^{\frac{1}{\theta}}$ with $\theta=\frac{1}{B}$ and we get the following relation
\[
\sup_{a\in\mathbb{R}^2}N(S(a,R/2)\cap\Gamma_{G_M},R^{\frac{1}{\theta}})\geq \left(\frac{1}{ab}\right)^{-D^-(M+1)}\frac{1}{rb^{M+1}}=\left(\frac{R}{r}\right)^{1+D^-}.
\]
The above inequality holds for arbitrarily large $M$ and therefore it holds also for arbitrarily small $R,R^{\frac{1}{\theta}}$. This is a covering property for $\Gamma_{G_M}$ and we can translate it to a covering property for $\Gamma_{T_{a,b}}$. By using Lemma \ref{ADD} and Remark \ref{ADDr} together with condition $(1)$ we see that there exist a constant $C>0$ such that
\begin{eqnarray*}
& &\sup_{a\in [x_0,x_0+\frac{1}{b^{M+1}}]\times\mathbb{R}}N(S(a,R/2)\cap\Gamma_{T_{a,b}},r)\\
&\geq& C\sup_{a\in [x_0,x_0+\frac{1}{b^{M+1}}]\times\mathbb{R}}N(S(a,R/2)\cap\Gamma_{G_M})-C\frac{R}{r}\\
&\geq& C\left(\frac{R}{r}\right)^{1+D^-}-C\frac{R}{r}.
\end{eqnarray*}
Then by definition of the Assouad spectrum and the inequality (\ref{Di}) in the first section we see that
\[
\Assouad \Gamma_{T_{a,b}}\geq \Assouad^\frac{1}{B} \Gamma_{T_{a,b}}\geq 1+D.
\]
This concludes the proof.

\end{proof}

We can now finish the proof of Theorem \ref{MAIN}. By Lemma \ref{SC} and Theorem \ref{LEVEL} we see that it is enough to show that the Takagi functions satisfy condition $(1)$ in the statement of Lemma \ref{SC}. In fact, condition $(1)$ is satisfied by $T_{a,b}$ whenever $a<1, b\in\{2\}\cup [2,\infty], ab\in (1,2)$. We put the last step of proving Theorem \ref{MAIN} in the following lemma. 
\begin{lma}\label{NONINTEGRAL}
If the parameters $a,b$ satisfy the following conditions
\[
a<1,b\in\{2\}\cup[3,\infty],2>ab>1,
\]
then the Takagi functions
\[
T_{a,b}(x)=\sum_{n=0}^{\infty}a^nT(b^n x)
\]
satisfy the condition 1 in the statement of Lemma \ref{SC}.
\end{lma}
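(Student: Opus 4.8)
The plan is to produce, for every $M$, a single $b$-adic interval $J_k=(kb^{-(M+1)},(k+1)b^{-(M+1)})$ on whose interior the partial sum $F_M(x)=\sum_{n=0}^{M}a^nT(b^nx)$ is \emph{affine} with a slope that is bounded in absolute value independently of $M$. Since condition $1$ of Lemma \ref{SC} only asks for a uniform Lipschitz bound on such an interval, the constant $C_1$ can then be read off directly from the slope bound.

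First I would record the derivative of $F_M$. Away from the corner points of the summands, the tent map has slope $\pm1$, so
\[
F_M'(x)=\sum_{n=0}^{M}(ab)^n\,\epsilon_n(x),\qquad \epsilon_n(x)=\begin{cases}+1 & \{b^nx\}\in(0,\tfrac12)\\ -1 & \{b^nx\}\in(\tfrac12,1),\end{cases}
\]
where $\{\cdot\}$ denotes the fractional part. Thus $\epsilon_n(x)$ is governed by the leading base-$b$ digit of $\{b^nx\}$, which for $n=0,\dots,M$ is among the first $M+1$ digits of the expansion $x=0.b_1b_2\cdots$. Because $1<ab<2$, I would apply Lemma \ref{POWERBOUND} with $\beta=ab$ to obtain a sign pattern $(\epsilon_0,\dots,\epsilon_M)\in\{\pm1\}^{M+1}$ satisfying $\big|\sum_{n=0}^{M}\epsilon_n(ab)^n\big|\le \frac{1}{ab-1}$.

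Next I would realise this prescribed pattern geometrically, which is the heart of the argument. Fix the first $M+1$ digits of $x$ by setting $b_{n+1}=0$ whenever $\epsilon_n=+1$ and $b_{n+1}=b-1$ whenever $\epsilon_n=-1$, for $n=0,\dots,M$; this determines a unique $k$ and hence $J_k$. The reason for using only the extreme digits $0$ and $b-1$ is that they pin the leading digit of $\{b^nx\}$ to an extreme value for \emph{every} $n\le M$ simultaneously and for every $x$ in the interior of $J_k$: a leading digit $0$ forces $\{b^nx\}\in[0,1/b)\subset(0,\tfrac12)$, while a leading digit $b-1$ forces $\{b^nx\}\in[(b-1)/b,1)\subset(\tfrac12,1)$. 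Here I would use $b\ge 2$: the containment relies on $1/b\le\tfrac12$ and $(b-1)/b\ge\tfrac12$, both equivalent to $b\ge 2$, with $b>2$ giving strict interior containment and the single value $b=2$ handled by observing that the corner $\{b^nx\}=\tfrac12$ then occurs only at an endpoint of $J_k$, never in its interior. Consequently each $\epsilon_n(\cdot)$ is constant and equal to the target sign throughout the interior of $J_k$, no corner of any summand lies there, and $F_M$ is genuinely affine on that interval.

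Finally, on the interior of $J_k$ the slope of $F_M$ equals $\sum_{n=0}^{M}\epsilon_n(ab)^n$, whose modulus is at most $\frac{1}{ab-1}$ by the choice of pattern, so $|F_M(x_1)-F_M(x_2)|\le\frac{1}{ab-1}|x_1-x_2|$ for all $x_1,x_2\in J_k$; taking $C_1=\frac{1}{ab-1}+1$ yields the strict inequality of condition $1$ of Lemma \ref{SC} with a constant independent of $M$, completing the proof. The hard part will be the realisation step: guaranteeing that one and the same interval $J_k$ keeps all of $\epsilon_0,\dots,\epsilon_M$ constant and equal to the prescribed signs. This is precisely where integrality of $b$ and the bound $b\ge 2$ enter, since for non-integer $b\in(2,3)$ the extreme-digit trick no longer places $\{b^nx\}$ cleanly into a single half, which is why such $b$ are excluded from the statement.
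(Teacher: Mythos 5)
Your argument is correct for integer $b\ge 2$, and in that case it is essentially the paper's argument rewritten in digit notation: you prescribe, scale by scale, which half of a period $b^nx$ lands in, and then invoke Lemma \ref{POWERBOUND} with $\beta=ab$ to keep the slope $\sum_{n=0}^{M}\epsilon_n(ab)^n$ bounded by $1/(ab-1)$ uniformly in $M$. The corner analysis at $b=2$ and the choice $C_1=\frac{1}{ab-1}+1$ are fine.

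The gap is in the realisation step, exactly where you flagged the difficulty, because you have misread which parameters the lemma covers. The hypothesis $b\in\{2\}\cup[3,\infty]$ is a condition on a \emph{real} number, so non-integral values such as $b=\pi$ are included (the label ``NONINTEGRAL'' is the giveaway; only $b\in(2,3)$ and $b<2$ are excluded), and for such $b$ there is no base-$b$ expansion with digit set $\{0,\dots,b-1\}$: the extreme-digit trick is unavailable and your proof does not apply. The paper's realisation avoids digits entirely. It works with the intervals $I_M(j)=[\frac{j}{2b^M},\frac{j+1}{2b^M}]$, on whose interiors $T'(b^Mx)$ is identically $+1$ or $-1$ according to the parity of $j$, and observes that since $\frac{1}{2b^M}\ge\frac{3}{2b^{M+1}}$ for $b\ge 3$ (with $b=2$ checked separately), each $I_M(j)$ contains two \emph{consecutive} $(M+1)$-level intervals $I_{M+1}(l),I_{M+1}(l+1)$, one of each parity; a nested choice $I_0(j_0)\supset I_1(j_1)\supset\cdots\supset I_M(j_M)$ then realises any prescribed sign pattern on $I_M(j_M)$ with no integrality assumption. (This containment count is also what genuinely fails for $b\in(2,3)$, which is why that range is excluded.) If your only aim is Theorem \ref{MAIN}, where $b$ is an integer greater than $2$, your proof suffices; as a proof of Lemma \ref{NONINTEGRAL} as stated, it is incomplete.
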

\begin{proof}
For each integer $M>0$ we shall consider the $M$-level intervals
\[
I_M(j)=\left[\frac{j}{2b^M},\frac{j+1}{2b^M}\right],j\in\mathbb{Z}.
\]
If $b\geq 3$, we see that
\[
\frac{1}{2b^M}\geq\frac{3}{2b^{M+1}},
\]
this implies that any interval $I_M(j)$ contains at least two $(M+1)$-level intervals say
\[
I_{M+1}(l), I_{M+1}(l+1).
\]
$l, l+1$ is a pair of integers, one of them is odd and the other is even. It is easy to see that this is also true for $b=2$. Then we see that given any sequance $\omega_i\in\{\pm 1\},i=0,1,2\dots$ it is possible to choose a sequence of integers $j_i,i=0,1,\dots$ such that
\[
I_i(j_i)\subset I_{i-1}(j_{i-1})\subset[0,1],
\]
and that $j_i$ is even if and only if $\omega_i=1$.
 We see that
\[
T'(b^M x)=
\begin{cases}
1 & x\in I_M(j), j \text{ is even},\\
-1 & x\in I_M(j), j \text{ is odd}.
\end{cases}
\]
So for any integer $M>0$ we can find intervals $I_M(j)\subset [0,1]$ such that
\[
D_M(x)=\sum_{n=0}^{M}a^nT(b^n x))'=\sum_{n=0}^{M}a^nb^nT'(b^n x)
\]
is constant on $I_M(j)$ and the value can take all numbers in the following set
\[
U=
\left\{t\in\mathbb{R}:\exists \epsilon_n\in\{\pm1\},n\in\{0,1,2\dots,M\},t=\sum_{n=0}^{M}\epsilon_n(ab)^n\right\}.
\]
Then by lemma \ref{POWERBOUND} we see that the result follows.
\end{proof}

\providecommand{\bysame}{\leavevmode\hbox to3em{\hrulefill}\thinspace}
\providecommand{\MR}{\relax\ifhmode\unskip\space\fi MR }
\providecommand{\MRhref}[2]{%
	\href{http://www.ams.org/mathscinet-getitem?mr=#1}{#2}
}
\providecommand{\href}[2]{#2}


\begin{thebibliography}{dABCFS11}
	
	\bibitem[AK11]{AK}
	P. Allaart and K. Kawamura, \emph{The Takagi function: a survey}, Real
	Analysis Exchange, \textbf{37}, (2011),1-54.
	
	\bibitem[Ba02]{Bandt}
	C. Bandt, \emph{On the Mandelbrot set for pairs of linear maps},
	Nonlinearity, \textbf{15}, (2002), 1127-1147.
	
	\bibitem[B15]{Ba}
	K. Bara{\'{n}}ski, \emph{Dimension of the graphs of the Weierstrass-type
		functions}, Springer International Publishing, Cham, (2015),77-91.
	
	\bibitem[BBR14]{BBR}
	K. Bara\'{n}ski, B. B\'{a}r\'{a}ny, and J. Romanowska,
	\emph{On the dimension of the graph of the classical Weierstrass function},
	Advances in Mathematics, \textbf{265}, (2014), 32 - 59.
	
	\bibitem[BY17]{BY}
	S. Baker and H. Yu, \emph{Root sets of polynomials and power series with
		finite choices of coefficients}, Computational Methods and Function Theory, \textbf{18},(2018), 89--97.
	
	
	\bibitem[ABCS11]{AMO}
	E. ~de~Amo, I. ~Bhouri, M. ~D\'{i}az Carrillo, and J.~Fern\'{a}ndez-S\'{a}nchez,
	\emph{The Hausdorff dimension of the level sets of Takagi's function},
	Nonlinear Analysis: Theory, Methods \& Applications, \textbf{74}, (2011), 5081 - 5087.
	
	\bibitem[Fa05]{Fa}
	K. Falconer, \emph{Fractal geometry: Mathematical foundations and
		applications, second edition}, John Wiley and Sons, Ltd, 2005.
	
	\bibitem[F14]{JMF}
	J. Fraser, \emph{Assouad type dimensions and homogeneity of fractals},
	Transactions of the American Mathematical Society, \textbf{366}, (2014),
	6687--6733.
	
	\bibitem[F17]{Fr}
	J. Fraser, \emph{Distance sets, orthogonal projections, and passing to
		weak tangents}, to appear in Israel Journal of Mathematics, preprint, arxiv:1703.06900.
	
	\bibitem[FaF11]{FF}
	K. Falconer and J. Fraser, \emph{The horizon problem for prevalent surfaces}, Mathematical Proceedings of the Cambridge Philosophical Society, \textbf{151}, (2011), 355-372.
	\bibitem[FH17]{JMF2}
	J. Fraser and D. Howroyd, \emph{Assouad type dimensions for
		self-affine sponges}, Annales Academiae Scientiarum Fennicae Mathematica,
	\textbf{42}, (2017), 149--174.
	
	\bibitem[FY18]{JMF3}
	J. Fraser and H. Yu, \emph{New dimension spectra: finer information on
		scaling and homogeneity}, Advances in Mathematics, \textbf{329}, (2018), 273-328.
	
	\bibitem[Fu08]{Fu} H. Furstenberg. \emph{Ergodic fractal measures and dimension conservation}, Ergodic
	Theory Dynamical Systems, \textbf{28} (2008), 405--422.
	
	
	
	\bibitem[HY17]{HY}
	D. Howroyd and H. Yu, \emph{Assouad dimension of random processes}, preprint: arxiv: 1707.02507, (2017).
	
	
	\bibitem[HL93]{HL}
	T.Y. Hu and K.S. Lau, \emph{Fractal dimensions and singularities of the
		Weierstrass type functions}, Transactions of the American Mathematical
	Society \textbf{335} (1993), 649--665.
	
	\bibitem[KPY84]{KPY}
	J. Kaplan, J. Mallet-Paret, and J. Yorke, \emph{The Lyapunov dimension
		of a nowhere differentiable attracting torus}, Ergodic Theory and Dynamical
	Systems, \textbf{4(2)}, (1984), 261--281.
	
	\bibitem[KOR15]{KOR}
	A. K{\"a}enm{\"a}ki, T. Ojala and E. Rossi, \emph{Rigidity of
		quasisymmetric mappings on self-affine carpets}, to appear in
	International Mathematics Research Notes, preprint, arxiv:1607.02244
	
	\bibitem[L12]{JL}
	J. Lagarias, \emph{The Takagi function and its properties}, RIMS
	Kokyuroku Bessatsu, \textbf{B34}, (2012), 153--189.
	
	\bibitem[MT10]{MT} J. M. Mackay and J. T. Tyson. \emph{Conformal dimension. Theory and application}, University
	Lecture Series, 54. American Mathematical Society, Providence, RI, (2010).
	
	\bibitem[OP93]{OP}
	A. Odlyzko and B. Poonen, \emph{Zeros of polynomials with 0,1
		coefficients}, L'Enseignement Math\'{e}matique, \textbf{39}, (1993), 317--348.
	
	\bibitem[S17]{WXS}
	W.X. Shen, \emph{Hausdorff dimension of the graphs of the classical
		Weierstrass functions}, Mathematische Zeitschrift, (2017), 1-44
	
	
	
	
\end{thebibliography}
\end{document}